\numberwithin{equation}{section}
\theoremstyle{plain}
\newtheorem{theo}{Theorem}[section]
\newtheorem{lem}[theo]{Lemma}
\newtheorem{prop}[theo]{Proposition}
\newtheorem{coro}[theo]{Corollary}
\newtheorem{claim}[theo]{Claim}
\theoremstyle{definition}
\newtheorem{defi}[theo]{Definition}
\theoremstyle{remark}
\def\Zop{\bZ[1/p]}
\def\WnOX{W_n\cO_X}
\def\WO#1#2{W_{#1}\cO_{#2}}
\def\pzD{(\pz)_{X|D}}
\def\pnzD{(\pnz)_{X|D}}
\def\pXDp#1{(\bZ/p^{#1}\bZ)_{X|D/p}}
\def\pzE{(\pz)_{Y|E}}
\def\pnzE{(\pnz)_{Y|D}}
\def\pYEp#1{(\bZ/p^{#1}\bZ)_{Y|E/p}}
\def\pzX{(\pz)_X}
\def\ppzX#1#2{(\bZ/p^{#1}\bZ)_{X|#2}}
\newcommand{\Spec}{\mathrm{Spec}}
\newcommand{\Hom}{\text{\rm Hom}}
\newcommand{\Cone}{\mathrm{Cone}}
\def\art{{\mathrm{art}}} 
\def\rsw{{\mathrm{art}}}      
\def\art{{\mathrm{art}}}      
\def\ord{{\mathrm{ord}}}      
\def\Coker{{\mathrm{Coker}}}
\def\Ker{{\mathrm{Ker}}}    
\def\Hom{{\mathrm{Hom}}}    
\def\ord{{\mathrm{ord}}}    
\def\ch{{\mathrm{ch}}}    
\def\cF{{\mathcal F}}
\def\cK{{\mathcal K}}
\def\cL{{\mathcal L}}
\def\cO{{\mathcal O}}
\def\cK{{\mathcal K}}
\def\cL{{\mathcal{L}}}
\def\lam{\lambda}
\def\bZ{{\mathbb Z}}
\def\bQ{{\mathbb Q}}
\def\fm{{\frak{m}}}
\def\fmK{\fm_K}
\def\qz{{\bQ}/{\bZ}}
\def\pnz{\bZ/p^n\bZ}
\def\pz{\bZ/p\bZ}
\def\psz{\bZ/p^s\bZ}
\def\rmapo#1{\overset{#1}{\longrightarrow}}
\def\isom{\overset{\cong}{\longrightarrow}}
\def\mlam{m_\lam}
\def\Clam{C_\lam}
\def\Klam{K_{\lam}}
\def\fillog#1{\mathrm{fil}^{\log}_{#1}}
\def\fil#1{\mathrm{fil}_{#1}}
\def\gr#1{\mathrm{gr}_{#1}}
\def\FH#1#2{\fil{#2}H^1(#1)}
\def\FHlog#1#2{\fillog{#2}H^1(#1)}
\def\Zinf{Z_{\infty}}
\def\Zinf{Z_{\infty}}
\def\piabXD{\pi_1^{ab}(X,D)}
\def\piab#1{\pi^{ab}_1(#1)}
\def\qaq{\quad\text{and}\quad}
\def\qfor{\quad\text{for }}
\def\qwith{\quad\text{with }}
\def\K2#1{K_2(#1)}
\begin{document}

\title[Lefschetz theorem for abelian fundamental group with modulus]
{Lefschetz theorem for abelian fundamental group with modulus}

\author{Moritz Kerz}
\author{Shuji Saito}
\address{Moritz Kerz\\
NWF I-Mathematik\\
Universit\"at Regensburg\\
93040 Regensburg\\
Germany}
\email{moritz.kerz@mathematik.uni-regensburg.de}
\address{Shuji Saito\\
Interactive Research Center of Science, 
Graduate School of Science and Engineering,
 Tokyo Institute of Technology\\
Ookayama, Meguro\\
Tokyo 152-8551\\
Japan
}
\email{sshuji@msb.biglobe.ne.jp}

\begin{abstract}
We prove a Lefschetz hypersurface theorem for abelian fundamental groups allowing wild
ramification along some divisor.  In fact, we show that isomorphism holds if the degree
of the hypersurface is large relative to the ramification along the divisor.
\end{abstract}

\maketitle

\maketitle

\section{Statement of main results}\label{intro}

Lefschetz hyperplane theorems represent  an important technique in the study of
Grothen\-dieck's fundamental group $\pi_1(X)$ of an algebraic varieties $X$ (we omit base points for
simplicity). Roughly speaking one gets an isomorphism of the
form
\[
\iota_{Y/X} : \pi_1(Y) \xrightarrow{\sim} \pi_1(X)
\]
for a suitable hypersurface section $Y \to X$ if $\dim(X)  \ge 3$. Purely algebraic Lefschetz
theorems for projective varieties satisfying certain regularity assumptions were developed
in \cite{SGA2}. The case of non-proper varieties $X,Y$ is more intricate
because one needs a precise control of the ramification at the infinite locus. We show in
the present note that for the abelian quotient of the fundamental group a Lefschetz
hyperplane theorem does in fact hold. Our basic technical ingredient is the higher
dimensional ramification theory of Brylinski, Kato and Matsuda which is recalled in
Section~\ref{Lefschetz-RT}.
We expect that there is a non-commutative analog of our Lefschetz theorem, which should
have applications to $\ell$-adic representations of fundamental groups, especially over
finite fields as studied in \cite{EK}.

To formulate our main result, 
let $X$ be a normal variety over a perfect field $k$ and let $U\subset X$ be an open subset such that
$X\setminus U$ is the support of an effective Cartier divisor on $X$.
Let $D$ be an effective Cartier divisor on $X$ with support in $X\setminus U$.
We introduce the abelian fundamental group $\piabXD$ as a quotient of $\piab U$  
classifying abelian \'etale coverings of $U$ with ramification bounded by $D$. 
More precisely, for an integral curve $Z\subset U$, let $Z^N$ be the normalization of the closure of $Z$
in $X$ with $\phi_Z: Z^N \to X$, the natural map.
Let $\Zinf\subset Z^N$ be the finite set of points $x$ such that $\phi_Z(x)\not\in U$.
Then $\piabXD$ is defined as the Pontryagin dual of the group $\fil D H^1(U)$ of continuous characters 
$\chi:\piab U\to \qz$ such that for any integral curve $Z\subset U$ , 
its restriction $\chi|_{Z}: \piab {Z}\to \qz$ satisfies the following inequality of Cartier divisors on $Z^N$:
\[
\underset{y\in \Zinf}{\sum}\; \art_y(\chi|_{Z}) [y] \;\leq \;\phi_Z^* D,
\]
where $\art_y(\chi|_{Z})\in \bZ_{\ge 0}$ is the Artin conductor of $\chi|_{Z}$ at $y\in \Zinf$ and $\phi_Z^* D$
is the pullback of $D$ by the natural map $\phi_Z :Z^N \to X$. 

Such a global measure of ramification in terms of curves has been first considered by Deligne and Laumon, see \cite{La}.
\medbreak

Now assume that $X$ is smooth projective over $k$ (we fix a projective embedding) and that $C=X\setminus U$ is a simple normal crossing divisor. 
Let $Y$ be a smooth hypersurface section such that $Y\times_X C$ is a reduced simple normal crossing divisor on $Y$
and write $\deg (Y)$ for the degree of $Y$ with respect to the fixed projective embedding
of $X$.  Set $E=Y\times_X D$. Then one sees from the definition that
the map $Y\cap U \to U$ induces a natural map
\[
\iota_{Y/X}:  \piab {Y,E} \to \piab {X,D}.
\]

Our main theorem says:

\begin{theo}\label{thm.Lefschetz}
Assume that $Y$ is {\em sufficiently ample} with respect $(X,D)$ (see Definition \ref{def.ampleD}).
If $d:=\dim(X)\geq 3$, $\iota_{Y/X}$ is an isomorphism. 
If $d=2$, $\iota_{Y/X}$ is surjective. 
\end{theo}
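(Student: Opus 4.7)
The plan is to pass to Pontryagin duals. By definition $\piab{X,D}$ (resp.\ $\piab{Y,E}$) is dual to the discrete group $\fil{D}H^1(U)$ (resp.\ $\fil{E}H^1(Y\cap U)$), so it is enough to show that the restriction map
\[
\iota_{Y/X}^{*} : \fil{D}H^1(U) \lra \fil{E}H^1(Y\cap U)
\]
is injective for $d\geq 2$ and bijective for $d\geq 3$. Injectivity of $\iota_{Y/X}^{*}$ gives the surjectivity of $\iota_{Y/X}$ in both dimension ranges, and surjectivity of $\iota_{Y/X}^{*}$ gives the injectivity of $\iota_{Y/X}$ when $d\geq 3$.

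The injectivity of $\iota_{Y/X}^{*}$ is the easier half. A character $\chi\in \fil{D}H^1(U)$ restricting to zero on $Y\cap U$ corresponds to an abelian \'etale cover of $U$ that splits over $Y\cap U$. I would appeal to a Lefschetz-type surjectivity $\pi_1^{ab}(Y\cap U)\twoheadrightarrow \pi_1^{ab}(U)$ valid whenever $Y$ is sufficiently ample: the cover is then already split over $U$, so $\chi=0$. This non-proper version of the SGA2 statement is what the Bertini part of Definition~\ref{def.ampleD} should be arranged to guarantee, and the argument works uniformly for $d\geq 2$.

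For the surjectivity of $\iota_{Y/X}^{*}$ when $d\geq 3$, given $\chi'\in \fil{E}H^1(Y\cap U)$ one must produce a $\chi\in \fil{D}H^1(U)$ restricting to $\chi'$. The curve-wise definition of $\fil{D}$ does not descend from $Y$ to $X$ in any direct way, so the plan is to trade $\fil{D}$ for the intrinsic \emph{logarithmic} filtration $\fillog{D}H^1(U)$ supplied by the Brylinski--Kato--Matsuda ramification theory recalled in Section~\ref{Lefschetz-RT}. One then proceeds in two steps: (a) show that $\fil{D}=\fillog{D}$ in the range permitted by the ``sufficiently ample'' hypothesis, reducing the problem to the logarithmic filtration; and (b) show that $\fillog{D}H^1(U)$ admits a description in terms of coherent sheaves on $X$ depending on $D$, compatible with restriction to an ample hypersurface, and apply a Kodaira-type vanishing to conclude that $\fillog{D}H^1(U)\to \fillog{E}H^1(Y\cap U)$ is an isomorphism for $d\geq 3$.

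The main obstacle is step (b): translating the modulus condition into coherent cohomology and identifying the correct vanishing statement. The role of Definition~\ref{def.ampleD} is precisely to ensure simultaneously Bertini transversality of $Y$ with respect to $C$ and $D$, the identification $\fil{D}=\fillog{D}$, and the Kodaira-type vanishing needed for the cohomological Lefschetz step. Once these ingredients are in place, $\iota_{Y/X}^{*}$ is injective for $d\geq 2$ and bijective for $d\geq 3$, and Theorem~\ref{thm.Lefschetz} follows by Pontryagin duality.
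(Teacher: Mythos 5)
Your high-level skeleton (dualize, reduce to the logarithmic filtration, then run a coherent-cohomological Lefschetz argument) is indeed the paper's strategy, but the proposal has three concrete problems. First, the injectivity of $\iota_{Y/X}^{*}$ is \emph{not} obtained in the paper from a surjection $\piab{Y\cap U}\twoheadrightarrow \piab{U}$, and Definition \ref{def.ampleD} contains no Bertini-type genericity clause that would guarantee such a surjection: it consists only of coherent vanishing conditions $(A1)$, $(A2)$, $(B)$ relative to the fixed $D$. Invoking an unbounded non-proper Lefschetz statement for the full abelian fundamental group is both unjustified by the stated hypotheses and stronger than needed; the paper gets injectivity on $\fil{D}H^1(U)$ from the same Serre-duality computation that gives surjectivity (Lemma \ref{lem1.Lefschetz}, using $(A1)$ resp.\ $(B)$), together with the prime-to-$p$ case quoted from Schmidt--Spiess --- a case split your coherent approach cannot avoid, since the Artin--Schreier--Witt description only sees $p$-power torsion. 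Second, your step (a) is false as stated: $\fil{D}$ and $\fillog{D}$ do not coincide (Lemma \ref{CFT.lem1} only gives $\fil{D}\subset\fillog{D}\subset\fil{D+C}$, with a genuine discrepancy at components whose multiplicity is divisible by $p$). What the paper actually uses is $\fil{D'}=\fillog{D'-C}$ for the auxiliary divisor $D'$ with prime-to-$p$ multiplicities; descending from $\fil{D'}$ back to $\fil{D}$ for the surjectivity statement then requires a separate argument via the refined Artin conductor embedding $\fil{D'}H^1(U)/\fil{D}H^1(U)\hookrightarrow \bigoplus_{\lam} H^0(\Clam,\Omega^1_X(D')\otimes_{\cO_X}\cO_{\Clam})$ and the vanishing in $(A2)$ --- an ingredient entirely absent from your plan, and the only place $(A2)$ enters.

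Third, and most importantly, the step you label ``the main obstacle'' is the actual content of the proof, and you leave it unexecuted. The coherent description of the logarithmic filtration is the isomorphism $\fillog{D}H^1(U)[p^n]\simeq H^1(X,\pnzD)$, where $\pnzD$ is the cone of $1-F$ on the filtered Witt vector sheaves $\fillog{D/p}\WnOX\to\fillog{D}\WnOX$; establishing it requires the local cohomology computations of Lemma \ref{lem.FHp}, including the nontrivial vanishing $H^2_x(X,\pnzD)=0$ for points of codimension $\geq 2$ (proved by an induction on the multiplicities of $D$). The ``Kodaira-type vanishing'' is then not a citation but a reduction, via the triangle $\cK\to\pzD\to\pzE$ and Serre duality, to exactly the hypotheses $(A1)$/$(B)$, followed by an induction on $n$ using the triangle of Lemma \ref{lem0.FHp}. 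As written, your proposal is a plausible outline that identifies where the difficulty lies but supplies neither the key constructions nor the correct bookkeeping between the two filtrations, so it does not constitute a proof.
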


The prime-to-$p$ part of the theorem is due to Schmidt and Spiess \cite{SS}, where $p=\ch(k)$.
Below we see that $Y$ is sufficiently ample if $\deg(Y)\gg 0$.

\begin{coro}\label{coro.finite}
Let $X$ be a normal proper variety over a finite field $k$.
Then $\piab {X,D}^0$ is finite, where
\[
\piab {X,D}^0=\Ker\big(\piab {X,D}\to \piab{\Spec(k)}\big).
\]
\end{coro}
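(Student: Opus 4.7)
The plan is to iteratively apply Theorem~\ref{thm.Lefschetz} to reduce $X$ to a smooth projective curve, on which the finiteness becomes classical geometric class field theory.

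\emph{Reduction to smooth projective.} First I would reduce to the case where $X$ is smooth projective with SNC boundary. Chow's lemma combined with de Jong's alteration theorem yields a smooth projective variety $\tX$ over $k$ and a proper, surjective, generically finite morphism $\pi \colon \tX \to X$ for which $\pi^{-1}(X \sm U)$ is a reduced simple normal crossing divisor; set $\tD := \pi^*D$. The functoriality of $\piab{-,-}$ --- which for a finite morphism of normalized curves rests on the estimate $\art_{y'}(f^*\chi) \leq e_{y'/y}\,\art_y(\chi)$ for Artin conductors, together with the curve-by-curve definition of $\fil{D}H^1(U)$ --- gives a continuous homomorphism $\piab{\tX, \tD} \to \piab{X, D}$ whose image has finite index because $\pi$ is generically finite. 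It therefore suffices to prove $\piab{\tX, \tD}^0$ is finite.

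\emph{Iterated Lefschetz sections.} Bertini together with Definition~\ref{def.ampleD} allows us to choose a flag of sufficiently ample smooth hyperplane sections
\[
Z \subset Y_{d-2} \subset \cdots \subset Y_1 \subset \tX,
\]
each $Y_i$ smooth of dimension $d-i$ meeting the current boundary divisor in a reduced simple normal crossing divisor, and $Z$ a smooth projective curve over $k$. Theorem~\ref{thm.Lefschetz} yields an isomorphism of $\piab{-,-}$ at each inclusion of ambient dimension $\geq 3$ and a surjection at the final descent from the surface $Y_{d-2}$ to $Z$. Composing, one obtains a surjection $\piab{Z, D_Z}^0 \twoheadrightarrow \piab{\tX, \tD}^0$ with $D_Z := \tD|_Z$, so the problem reduces to finiteness for a curve.

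\emph{Curves over finite fields.} For a smooth projective geometrically connected curve $Z$ over the finite field $k$ with effective modulus $D_Z$, the classical geometric class field theory of Serre and Rosenlicht identifies $\piab{Z, D_Z}$ with the profinite completion of the modulus Picard group $\Pic(Z, D_Z)$, which fits into a short exact sequence
\[
0 \lra J(Z, D_Z)(k) \lra \Pic(Z, D_Z) \xrightarrow{\deg} \bZ \lra 0
\]
where $J(Z, D_Z)$ is the generalized Jacobian of modulus $D_Z$, a commutative algebraic $k$-group of finite type. Taking the kernel under the map $\piab{Z, D_Z} \to \piab{\Spec(k)} = \hat{\bZ}$ (dual to $\deg$) identifies $\piab{Z, D_Z}^0 \cong J(Z, D_Z)(k)$, which is finite because $k$ is finite. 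The main obstacle is the first-step reduction: the functoriality of the modulus-bounded filtration under the alteration and the finite-index image claim must both be made precise --- both are technical but standard consequences of the curve-based definition of $\fil{D}H^1(U)$ and the behavior of Artin conductors under finite morphisms of curves.
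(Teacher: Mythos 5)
Your proposal follows essentially the same route as the paper's (very terse) proof: reduce to the smooth projective SNC case via a de Jong alteration, using that the image of $\piab{U'}\to\piab{U}$ has finite index, then descend to a curve by iterated application of Theorem~\ref{thm.Lefschetz}, and conclude by class field theory for curves. Your version merely makes explicit the conductor functoriality and the generalized Jacobian argument that the paper leaves implicit, so it is correct and not a genuinely different proof.
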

\begin{proof}
In case $X$ and $X\setminus U$ satisfy the assumption of Theorem \ref{thm.Lefschetz}, 
the corollary follows from the corresponding statement for curves. The finiteness in the curves case is a consequence of class field theory.
For the general case, one can take by \cite{dJ} an alteration $f: X'\to X$ such that 
$X'$ and $X'\setminus U'$ with $U'=f^{-1}(U)$ satisfy the assumption of Theorem \ref{thm.Lefschetz}. 
Then the assertion follows from the fact that the map 
$f_*: \piab {U'} \to \piab U$ has a finite cokernel.
\end{proof}

Corollary~\ref{coro.finite} can also be deduced from  \cite[Thm.\ 6.2]{Ras}. It has recently
been generalized to the non-commutative setting by Deligne, see \cite{EK}.

Theorem~\ref{thm.Lefschetz} is a central ingredient in our paper \cite{KeS}. There we use
it to construct a reciprocity isomorphism between a Chow group of zero cycles with modulus
and the abelian fundamental group with bounded ramification. In fact
Theorem~\ref{thm.Lefschetz} allows us to restrict to surfaces in this construction.

\bigskip

\section{Review of ramification theory}\label{Lefschetz-RT}

First we review local ramification theory.
Let $K$ denote a henselian discrete valuation field of $\ch(K)=p>0$ with 
the ring $\cO_K$ of integers and residue field $\kappa$.
Let $\pi$ be a prime element of $\cO_K$ and $\fmK=(\pi)\subset \cO_K$ the maximal ideal.
By the Artin-Schreier-Witt theory, we have a natural isomorphism for $s\in \bZ_{\geq 1}$,
\begin{equation}\label{ASW.eq}
\delta_s: W_s(K)/(1-F)W_s(K) \isom H^1(K,\psz),
\end{equation}
where $W_s(K)$ is the ring of Witt vectors of length $s$ and $F$ is the Frobenius.
We have the Brylinski-Kato filtration indexed by integers $m\geq 0$
\[
\fillog m W_s(K) = \{(a_{s-1},\dots,a_1,a_0)\in W_s(K)\;|\; p^i v_K(a_i)\geq -m\},
\]
where $v_K$ is the normalized valuation of $K$. In this paper we use its non-log version introduced by 
Matsuda \cite{Ma}:
\[
\fil m W_s(K) = \fillog {m-1} W_s(K) + V^{s-s'} \fillog {m} W_{s'}(K),
\]
where $s'=\min\{s,\ord_p(m)\}$. 
We define ramification filtrations on $H^1(K):=H^1(K,\qz)$ as
\begin{align*}
 \FHlog K m &= H^1(K)\{p'\} \oplus \underset{s\geq1}{\cup} \delta_s(\fillog m W_s(K))
 &(m\ge 0),\\
\FH K m &= H^1(K)\{p'\} \oplus \underset{s\geq1}{\cup} \delta_s(\fil m W_s(K))
&(m\ge 1),
\end{align*}
where $H^1(K)\{p'\}$ is the prime-to-$p$ part of $H^1(K)$.
We note that this filtration is shifted by one from Matsuda's filtration
\cite[Def.3.1.1]{Ma}.
We also let $\FH {K} 0$ be the subgroup of all unramified characters.

\begin{defi}
For $\chi \in H^1(K)$ we denote the minimal $m$ with $\chi \in \FH{K} m$ by $\art_K(\chi)$
and call it the Artin conductor of $\chi$.
\end{defi}

We have the following fact (cf. \cite{Ka} and \cite{Ma}).

\begin{lem}\label{CFT.lem0} \mbox{}
\begin{itemize} 
\item[(1)]
$\FH {K} 1$ is the subgroup of tamely ramified characters.
\item[(2)]
$\FH {K} {m}\subset \FHlog {K} m \subset \FH {K} {m+1}$.
\item[(3)]
$\FH {K} m=\FHlog {K} {m-1}$ if $(m,p)=1$.
\end{itemize}
\end{lem}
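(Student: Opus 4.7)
The plan is to verify the three statements by direct manipulation of the Matsuda filtration, isolating the one essential input (Artin--Schreier--Witt over $\cO_K$) needed for part~(1).

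First I would tackle (2) and (3), which are purely formal consequences of the definition of $\fil{m}$ in terms of $\fillog{m}$. For (2), the inclusion $\fillog{m}W_s(K)\subset\fil{m+1}W_s(K)$ is immediate from the definition $\fil{m+1}W_s(K)=\fillog{m}W_s(K)+V^{s-s''}\fillog{m+1}W_{s''}(K)$. For the other inclusion $\fil{m}W_s(K)\subset\fillog{m}W_s(K)$, note that $\fillog{m-1}W_s(K)\subset\fillog{m}W_s(K)$ is trivial, and an element of $V^{s-s'}\fillog{m}W_{s'}(K)$ has its top $s-s'$ Witt coordinates equal to zero while the remaining coordinates satisfy the $\fillog{m}$ valuation bound, so lies in $\fillog{m}W_s(K)$. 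Taking $\delta_s$ and appending $H^1(K)\{p'\}$ yields the statement for $H^1(K)$. For (3), when $(m,p)=1$ we have $\ord_p(m)=0$, so $s'=\min\{s,0\}=0$ and hence $V^{s-s'}\fillog{m}W_{s'}(K)=V^sW_0(K)=0$. Therefore $\fil{m}W_s(K)=\fillog{m-1}W_s(K)$, which gives $\FH{K}{m}=\FHlog{K}{m-1}$.

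For (1), the analogous computation at $m=1$ gives $\ord_p(1)=0$, so $s'=0$ and $\fil{1}W_s(K)=\fillog{0}W_s(K)$. Unwinding the definition of the log filtration, $\fillog{0}W_s(K)=\{(a_{s-1},\dots,a_0)\mid v_K(a_i)\geq 0\}=W_s(\cO_K)$. Hence the $p$-primary part of $\FH{K}{1}$ is $\bigcup_{s\geq 1}\delta_s(W_s(\cO_K))$, and it remains to identify this with the unramified $p$-part of $H^1(K)$ and to observe that the tame characters are exactly the prime-to-$p$ part together with the unramified $p$-part (since $K^{\mathrm{tame}}/K$ has no wild, i.e.\ $p$-power, ramification).

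The key input — and the one step that is not pure bookkeeping — is the identification $\delta_s(W_s(\cO_K))=H^1_{\mathrm{unr}}(K,\pnz)$ with $n=s$. I would prove it by considering the reduction map $W_s(\cO_K)\to W_s(\kappa)$, which is surjective with kernel $W_s(\fmK)$; a Hensel-type argument shows that $1-F$ is bijective on $W_s(\fmK)$ (it is bijective on each graded piece $\fmK^{j}/\fmK^{j+1}$ for $j\geq 1$, and one passes to the limit using completeness of the henselization), whence $W_s(\cO_K)/(1-F)W_s(\cO_K)\isom W_s(\kappa)/(1-F)W_s(\kappa)$. Composing with the Artin--Schreier--Witt isomorphism \eqref{ASW.eq} for $\kappa$ identifies $\delta_s(W_s(\cO_K))$ with $H^1(\kappa,\psz)=H^1_{\mathrm{unr}}(K,\psz)$, as desired. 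The main obstacle is thus not a computation but a careful invocation of Artin--Schreier--Witt in its relative form over $\cO_K$; once that is in place, assembling the three parts is straightforward.
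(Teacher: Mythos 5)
Your argument is essentially correct, but it is worth noting that the paper does not prove this lemma at all: it is stated as a known fact with a pointer to Kato \cite{Ka} and Matsuda \cite{Ma}, so your proposal supplies a direct verification where the authors rely on the literature. Your treatment of (2) and (3) is exactly the right bookkeeping: with the paper's convention $(a_{s-1},\dots,a_0)$ the Verschiebung inserts zeros at the top indices and leaves the weights $p^i$ of the surviving coordinates unchanged, so $V^{s-s'}\fillog{m}W_{s'}(K)\subset\fillog{m}W_s(K)$ as you claim, and the computation $s'=\min\{s,\ord_p(m)\}=0$ for $(m,p)=1$ does collapse $\fil{m}$ to $\fillog{m-1}$. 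For (1), your reduction to $\delta_s(W_s(\cO_K))=H^1_{\mathrm{unr}}(K,\psz)$, together with the observation that a tame character of $p$-power order is unramified because the tame inertia quotient is pro-prime-to-$p$, is the standard and correct route (this last point deserves to be said explicitly, since the residue fields arising in the paper are imperfect, but it holds regardless).

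The one genuine inaccuracy is your justification of the surjectivity of $1-F$ on $W_s(\fmK)$: you invoke ``completeness of the henselization,'' but the henselization of a discrete valuation ring is in general \emph{not} $\fmK$-adically complete (e.g.\ the henselization of $k[t]_{(t)}$ is a proper subring of $k[[t]]$), so the graded-piece-plus-limit argument does not close. The correct argument, which your phrase ``Hensel-type argument'' already points toward, is to apply Hensel's lemma directly to the polynomial $x^p-x-a$ for $a\in\fmK$ (its derivative is a unit and $x=0$ is a root modulo $\fmK$), giving bijectivity of $1-F$ on $\fmK$ itself, and then to run a d\'evissage along the exact sequences $0\to\fmK\to W_s(\fmK)\to W_{s-1}(\fmK)\to 0$ furnished by $V$ and $R$. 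With that substitution the proof is complete.
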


The structure of graded quotients: 
\[
\gr m H^1(K)= \fil m H^1(K)/\fil {m-1} H^1(K) \quad( m>1)
\]
are described as follows. 
Let $\Omega_K^1$ be the absolute K\"ahler differential module and put
\[
\fil m \Omega_K^1 =\fm_K^{-m}\otimes_{\cO_K} \Omega_{\cO_K}^1.
\]
We have an isomorphism
\begin{equation}\label{grOmega}
\gr m \Omega_K^1 = \fil m \Omega_K^1/\fil {m-1} \Omega_K^1 \simeq 
\fmK^{-m} \Omega_{\cO_K}^1\otimes_{\cO_K} \kappa.
\end{equation}
We have the maps
\[
F^sd: W_s(K) \to \Omega_K^1\;;\; (a_{s-1},\dots,a_1,a_0) \to 
\underset{i=0}{\overset{s-1}{\sum}} \; a_i^{p^i-1} da_i.
\]
and one can check
$F^sd(\fil n W_s(K))\subset \fil n\Omega_K^1$. 

\begin{theo}\label{thm.Masuda}(\cite{Ma})
 The maps $F^sd$ factor through $\delta_s$ and induce a natural map
\[
\fil n  H^1(K) \to \fil n \Omega_K^1
\]
which induces for $m>1$ an injective map (called the refined Artin conductor for $K$)
\begin{equation}\label{rswK}
\rsw_K: \gr n  H^1(K) \hookrightarrow \gr n \Omega_K^1.
\end{equation}
\end{theo}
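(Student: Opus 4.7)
The plan is to prove the three assertions in turn---that $F^s d$ factors through $\delta_s$, that the resulting map respects the filtrations, and that the induced map on graded quotients is injective---following the explicit Witt vector approach of Matsuda \cite{Ma}.

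First, I would check the filtration compatibility $F^s d(\fil n W_s(K)) \subset \fil n \Omega_K^1$ directly from the definition. For an element $(a_{s-1},\dots,a_0)$ satisfying the log condition $p^i v_K(a_i) \geq -n$, the $i$-th summand $a_i^{p^i-1} da_i$ has $v_K$-order at least $-n$: the exponent $p^i$ exactly compensates for what $da_i$ produces. For the non-log version $\fil n = \fillog{n-1} + V^{s-s'}\fillog n$, one handles the extra Verschiebung piece by noting that $F^s d$ commutes with $V$ up to the appropriate shift.

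Next, for the factorization through $\delta_s$, the key identity is that $F^s d$ vanishes modulo $\fil{n-1}\Omega_K^1$ on the image of $1 - F$ inside $\fil n W_s(K)$. This is where the characteristic-$p$ vanishing $d(x^p) = 0$ enters, combined with the explicit Witt-vector addition formulas to handle cross-coordinate interactions. One then checks compatibility across different $s$ via the restriction morphisms $W_{s+1} \to W_s$, giving a well-defined map $\fil n H^1(K) \to \fil n \Omega_K^1$. For the injectivity of $\rsw_K$ on $\gr n H^1(K)$ for $n > 1$, I would choose explicit generators of $\gr n W_s(K)$ adapted to Matsuda's filtration and compute $F^s d$ on them, obtaining nonzero residues in $\gr n \Omega_K^1 \simeq \fmK^{-n}\Omega_{\cO_K}^1\otimes_{\cO_K}\kappa$. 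The hypothesis $n > 1$ is crucial: in the tame range $n = 1$ the additive Artin-Schreier-Witt relations force identifications that can only be resolved by the Matsuda shift.

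The main obstacle is the descent through $1 - F$ up to the filtration shift. This requires a careful analysis of nonlinear Witt vector addition in positive characteristic and a precise understanding of how the non-log correction $V^{s-s'}\fillog n W_{s'}(K)$ interacts with the differentials; these combinatorial computations form the technical heart of \cite{Ma}.
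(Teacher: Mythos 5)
The paper offers no proof of Theorem~\ref{thm.Masuda}: it is quoted from Matsuda \cite{Ma} (building on Kato \cite{Ka}), so there is no internal argument to compare your proposal against. On its own terms, your outline does follow the route that \cite{Ma} takes --- direct verification that $F^sd$ respects the filtrations, descent modulo $(1-F)$, and an analysis of graded pieces --- so the overall plan is the right one, and you correctly identify that the non-additivity of $F^sd$ and the Witt-vector addition formulas are where the work lies in the descent step.

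There is, however, a genuine gap in the injectivity step. You propose to ``choose explicit generators of $\gr n W_s(K)$ \dots and compute $F^sd$ on them, obtaining nonzero residues.'' Nonvanishing of an additive map on a set of generators does not imply injectivity: a sum of generators could still map to zero. What is actually required --- and what constitutes the hard part of \cite{Ka} and \cite{Ma} --- is the converse implication for \emph{arbitrary} elements: if $a\in\fil n W_s(K)$ and $F^sd(a)\in\fil {n-1}\Omega_K^1$, then $\delta_s(a)\in\fil {n-1}H^1(K)$. This is proved by first computing the graded quotient $\gr n H^1(K)$ explicitly in terms of differential forms and elements of the residue field $\kappa$ (a computation that must handle imperfect $\kappa$ and occupies the bulk of Matsuda's Section~3) and then checking injectivity on that explicit presentation. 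A smaller point you gloss over: for the Verschiebung part, the containment $F^sd\bigl(V^{s-s'}\fillog{n}W_{s'}(K)\bigr)\subset\fil n\Omega_K^1$ (rather than merely $\fil{n+1}\Omega_K^1$) uses precisely the condition $s'\le\ord_p(n)$, i.e.\ the divisibility $p^{i+1}\mid n$ for the relevant coordinates, which kills the polar term along $d\pi$; this is where Matsuda's choice $s'=\min\{s,\ord_p(m)\}$ enters and it deserves to be made explicit rather than subsumed under ``$F^sd$ commutes with $V$ up to the appropriate shift.''
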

\bigskip

Next we review global ramification theory.
Let $X,C$ be as in the introduction and fix a Cartier divisor $D$ with $|D| \subset C$.
We recall the definition of $\piabXD$.
We write $H^1(U)$ for the \'etale cohomology group $H^1(U,\qz)$ which is identified with 
the group of continuous characters $\piab U\to \qz$.

\begin{defi} \label{gloram.def1}
We define $\fil D H^1(U)$ to be the subgroup of $\chi\in H^1(U)$ satisfying the condition:
for all integral curves $Z\subset X$ not contained in $C$, 
its restriction $\chi|_{Z}: \piab Z\to \qz$ satisfies the following inequality of Cartier divisors on $Z^N$:
\[
\underset{y\in \Zinf}{\sum}\; \art_y(\chi|_{Z}) [y] \;\leq \;\phi_Z^* D,
\]
where $\art_y(\chi|_{Z})\in \bZ_{\ge 0}$ is the Artin conductor of $\chi|_{Z}$ at $y\in \Zinf$ and $\phi_Z^* D$
is the pullback of $D$ by the natural map $\phi_Z :Z^N \to X$. 
Define
\begin{equation}\label{piabXD}
\piabXD=\Hom(\FH U D,\qz),
\end{equation}
endowed with the usual pro-finite topology of the dual.
\end{defi}

\medskip

For the rest of this section we assume that $X$ is smooth and $C$ is simple normal crossing.
Let $I$ be the set of generic points of $C$ and let $\Clam=\overline{\{\lam\}}$ for $\lam\in
I$.
Write
 \begin{equation}\label{eqD}
 \displaystyle{D=\underset{\lambda\in I}{\sum} \mlam \Clam}.
 \end{equation}
For $\lam\in I$, let $\Klam$ be the henselization of $K=k(X)$ at $\lam$.
Note that $\Klam$ is a henselian discrete valuation field with residue field $k(\Clam)$. 

\begin{prop}\label{gloram.prop1}
We have 
\[
\fil D H^1(U) = \Ker\big(H^1(U) \to \underset{\lam\in I}{\bigoplus}\; H^1(\Klam)/\fil {\mlam} H^1(\Klam)\big).
\]
\end{prop}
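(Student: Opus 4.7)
The plan is to prove the two inclusions separately; write $\mathcal F_D$ for the right-hand side of the proposition.

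For $\mathcal F_D\subseteq\fil{D}H^1(U)$, I would fix $\chi\in\mathcal F_D$, an integral curve $Z\subset X$ not contained in $C$, and $y\in Z_\infty$ with image $x=\phi_Z(y)\in C$. Let $\lambda_1,\dots,\lambda_r\in I$ be the components of $C$ through $x$, with local equations $f_i\in\cO_{X,x}$ and ramification indices $e_i=v_y(\phi_Z^*f_i)$, so that $(\phi_Z^*D)_y=\sum_i e_im_{\lambda_i}$. Via the isomorphism (\ref{ASW.eq}) represent $\chi$ locally near $x$ by a Witt vector $(a_{s-1},\dots,a_0)\in W_s(k(X))$ satisfying simultaneously the pole bounds at each $\lambda_i$ dictated by $\fil{m_{\lambda_i}}W_s(K_{\lambda_i})$ (using Matsuda's explicit description of $\fil{\bullet}W_s$). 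Pulling back to $k(Z^N)$ and localizing at $y$, the estimate $v_y(\phi_Z^*a_j)\geq\sum_i e_i v_{\lambda_i}(a_j)$ yields a Witt vector in $\fil{(\phi_Z^*D)_y}W_s(K_y)$, where $K_y$ is the henselization at $y$; hence $\art_y(\chi|_Z)\leq(\phi_Z^*D)_y$, as required.

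For the reverse inclusion, fix $\lambda\in I$, set $m=m_\lambda$ and $n=\art_\lambda(\chi)$, and reduce to proving $n\leq m$. Let $V\subset C_\lambda$ be the dense open smooth locus of $C_\lambda$ disjoint from every other component $C_\mu$. The strategy is, for a suitable $x\in V$, to construct an integral curve $Z\subset X$ through $x$ meeting $C$ only at $x$, transverse to $C_\lambda$, so that $(\phi_Z^*D)_y=m$ at the unique preimage $y\in Z^N$ of $x$ and $\art_y(\chi|_Z)=n$; the hypothesis $\chi\in\fil DH^1(U)$ then forces $n\leq m$. To guarantee $\art_y(\chi|_Z)=n$, use Theorem \ref{thm.Masuda}: the refined conductor $0\neq\rsw_{\Klam}(\chi)\in\gr n\Omega^1_{\Klam}$ is represented, via (\ref{grOmega}), by a meromorphic 1-form $\omega$ on a Zariski neighborhood of the generic point of $C_\lambda$ with pole of order exactly $n$ along $C_\lambda$. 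Transversality ensures $\phi_Z^*\omega$ has pole of order $\leq n$ at $y$, with equality on a Zariski-open condition on the tangent direction of $Z$ at $x$ (nonvanishing of the contraction of $\omega$ with that tangent vector).

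A classical Bertini argument (or Poonen's Bertini over finite $k$) then produces a sufficiently general complete intersection of very ample divisors through $x$ meeting all imposed conditions: integrality, transversality to $C_\lambda$ at $x$, avoidance of other components of $C$, and nonvanishing of the tangential pairing with $\omega$. The main obstacle is this last step---transporting the refined Artin conductor faithfully onto a carefully chosen curve---since one must simultaneously control pole orders of $\omega$ along $C_\lambda$ under pullback and secure enough general position of $Z$, with care taken over finite base fields.
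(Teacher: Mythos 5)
The paper itself offers no argument for this proposition: it simply refers to \cite{KeS}, Cor.~2.7, so your sketch has to be measured against what such a proof actually requires. Your second inclusion is essentially the right strategy: pick a general closed point $x$ of $\Clam$, take a curve transversal to $\Clam$ at $x$ whose tangent direction pairs non-trivially with the refined Artin conductor, and use injectivity of $\rsw$ to conclude $\art_y(\chi|_Z)=\art_\lam(\chi)\le \mlam$. This does rest on two inputs you use tacitly: the compatibility of $\rsw$ with restriction to (transversal) curves, i.e.\ the functoriality of \cite[4.2.2]{Ma} underlying the second Proposition~\ref{gloram.prop1}, and a separate (easy) treatment of $\art_\lam(\chi)\le 1$, where $\rsw$ is not defined. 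Also note that requiring $Z$ to meet $C$ only at $x$ is both unachievable for a complete-intersection curve and unnecessary, since the modulus condition is an inequality at each point of $\Zinf$ separately.

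The genuine gap is in your first inclusion. The estimate $v_y(\phi_Z^*a_j)\ge\sum_i e_i\,v_{\lambda_i}(a_j)$ is false for a general rational function: it holds only if the polar locus of $a_j$ near $x$ is contained in $\bigcup_i C_{\lambda_i}$. The hypothesis only gives, for each $i$ separately, a representative of $\chi$ in $\fil{m_{\lambda_i}}W_s(K_{\lambda_i})$ modulo $(1-F)W_s(K_{\lambda_i})$; these representatives live in different henselizations, need not agree, and any global Witt vector representing $\chi$ may carry poles along auxiliary divisors through $x$ (where $\chi$ is unramified only modulo $1-F$), which would wreck the valuation estimate after pullback to $Z$. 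Producing a single $\underline{a}\in W_s(k(X))$ that simultaneously satisfies the bounds at all $\lambda_i$ through $x$ and has no other poles near $x$ is precisely the hard local-to-global step; it is where purity/local-cohomology arguments of the kind appearing in the Claim inside Lemma~\ref{lem.FHp} enter, and it is further complicated by the fact that the non-log filtration $\fil{m}$ is defined via a decomposition $\fillog{m-1}+V^{s-s'}\fillog{m}W_{s'}$ that is a priori different at each $\lambda_i$. As written, your first paragraph therefore does not establish the inclusion. The standard way to close it is to invoke the functoriality of the Artin conductor for morphisms of smooth schemes with SNC preimage of $C$, applied to $\phi_Z\colon Z^N\to X$; that functoriality is the real content of the results of Kato and Matsuda that the paper is citing, and proving it by hand amounts to redoing their induction on the conductor via $\rsw$ rather than a direct valuation count.
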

\begin{proof}
This is a consequence of ramification theory developed in \cite{Ka} and \cite{Ma}.
See \cite[Cor.2.7]{KeS} for a proof.
\end{proof}
\medbreak

\begin{prop}\label{gloram.prop1}
Fix $\lam\in I$ such that $m_\lam >1$ in \eqref{eqD}.
The refined Artin conductor $\rsw_{\Klam}$ (cf. Theorem \ref{thm.Masuda}) induces
a natural injective map
\[
\rsw_{\Clam}: \fil D H^1(U)/\fil {D-\Clam} H^1(U) \hookrightarrow
 H^0(\Clam,\Omega^1_X(D)\otimes_{\cO_X} \cO_{\Clam})
\]
which is compatible with pullback along maps $f:X' \to X$ of smooth varieties with the property that $f^{-1}(C)$
is a reduced simple normal crossing divisor.
\end{prop}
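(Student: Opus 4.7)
The plan is to produce the map as the composition of three already available pieces and then verify that its image really lies in global sections; injectivity will come essentially for free from Matsuda's injectivity, and pullback compatibility will be built in.

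First, I would construct the map on the level of generic stalks. By the previous proposition (the characterization of $\fil D H^1(U)$ as an intersection of local conditions), restriction to the henselization at $\lam$ gives a homomorphism
\[
\fil D H^1(U) \longrightarrow \fil{m_\lam}H^1(\Klam)
\]
which sends $\fil{D-\Clam}H^1(U)$ into $\fil{m_\lam-1}H^1(\Klam)$. Composing with Matsuda's refined Artin conductor (Theorem \ref{thm.Masuda}) and the identification \eqref{grOmega},
\[
\rsw_{\Klam}:\gr{m_\lam}H^1(\Klam)\hookrightarrow \gr{m_\lam}\Omega^1_{\Klam}\cong \fm_\lam^{-m_\lam}\Omega^1_{\cO_{\Klam}}\otimes_{\cO_{\Klam}}k(\Clam),
\]
produces a map from $\fil D H^1(U)/\fil{D-\Clam}H^1(U)$ into the stalk of $\Omega^1_X(D)\otimes\cO_{\Clam}$ at the generic point $\lam$ of $\Clam$.

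Second, I would check that this rational section extends to a global section on $\Clam$. Because $X$ is smooth and $C$ is simple normal crossing, $\Clam$ is smooth, so by algebraic Hartogs it suffices to test at each codimension $1$ point of $\Clam$, i.e.\ at the generic points of the intersections $\Clam\cap C_\mu$ with $\mu\in I\setminus\{\lam\}$. At such a point $\eta$ the local ring $\cO_{X,\eta}$ is a two-dimensional regular local ring with two boundary divisors, and the condition $\chi\in\fil D H^1(U)$ imposes both $\art_{\Klam}(\chi)\le m_\lam$ and $\art_{K_\mu}(\chi)\le m_\mu$. The order of pole of $\rsw_{\Klam}(\chi)$ along the branch $C_\mu$ must then be bounded by $m_\mu$, which is exactly the pole allowed by $\Omega^1_X(D)$ along $C_\mu$. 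Concretely I would reduce to the Artin--Schreier--Witt representatives in $\fil{m_\lam}W_s(\Klam)$ via \eqref{ASW.eq} and compute $F^sd$ on them, exploiting the two-variable local description given by \cite{Ma} at a crossing; this is the step I expect to be the main obstacle, because one must match Matsuda's non-log filtration against the divisor $D$ on the other branch and keep track of the $V^{s-s'}$ correction.

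Third, injectivity is now automatic: any class in the kernel has $\rsw_{\Klam}$-image zero in $\gr{m_\lam}H^1(\Klam)$, so by Theorem \ref{thm.Masuda} its restriction to $\Klam$ lies in $\fil{m_\lam-1}H^1(\Klam)$, and by the local characterization in Proposition \ref{gloram.prop1} this precisely means it lies in $\fil{D-\Clam}H^1(U)$.

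Finally, compatibility with a pullback $f:X'\to X$ follows because each ingredient is functorial: the filtration by Artin conductor is defined curve-theoretically and so pulls back, the local refined conductor $\rsw_{\Klam}$ is natural in the henselian discrete valuation field (Matsuda's construction uses only $F^sd$ and the Artin--Schreier--Witt isomorphism), and the identification of $\gr{m}\Omega^1$ with a twist of $\Omega^1_X$ by $D$ is manifestly functorial with respect to pullback maps that send normal crossings to reduced normal crossings.
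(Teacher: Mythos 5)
Your overall architecture is the right one and is in the spirit of the paper's (very short) proof: define the map at the generic point of $\Clam$ using the kernel description of $\fil D H^1(U)$ in terms of the local filtrations $\fil {\mlam} H^1(\Klam)$, compose with $\rsw_{\Klam}$ from Theorem \ref{thm.Masuda}, get injectivity from the injectivity of $\rsw_{\Klam}$ on $\gr {\mlam} H^1(\Klam)$ (this is where $\mlam>1$ enters), and note that functoriality is built into $F^sd$ and the Artin--Schreier--Witt isomorphism. The injectivity and pullback-compatibility parts of your argument are fine.

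The gap is exactly where you flag it, and it is not a minor one: the assertion that the a priori rational section $\rsw_{\Klam}(\chi)\in \fm_{\lam}^{-\mlam}\Omega^1_{\cO_{\Klam}}\otimes k(\Clam)$ actually lies in $H^0(\Clam,\Omega^1_X(D)\otimes_{\cO_X}\cO_{\Clam})$ is the entire content of the proposition, and you assert the pole bound along $\Clam\cap C_\mu$ rather than prove it. It does not follow formally from $\art_{K_\mu}(\chi)\le m_\mu$; one must control $F^sd$ of a Witt vector lying simultaneously in Matsuda's non-log filtrations for two intersecting branches, and the $V^{s-s'}$ correction in the definition of $\fil m W_s$ makes that bookkeeping genuinely delicate. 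Your Hartogs reduction is also incomplete as stated: the codimension-one points of $\Clam$ are not only the generic points of the intersections $\Clam\cap C_\mu$, and at a codimension-one point of $\Clam$ lying on no other boundary component you still have to show that $\rsw_{\Klam}(\chi)$ is regular there (relative to the $\mlam\Clam$-twist), which is again not automatic. Both halves --- regularity away from the crossings and the pole bound $m_\mu$ at the crossings --- are precisely Matsuda's integrality theorem for the refined Artin conductor, \cite[4.2.2]{Ma}, and the paper's proof consists of citing that result. So either invoke \cite[4.2.2]{Ma} at this point or be prepared to reprove it; as written, the central step of your argument is missing.
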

\begin{proof}
This follows from the integrality result \cite[4.2.2]{Ma} of the refined Artin conductor.
\end{proof}

\medbreak
Proposition \ref{gloram.prop1} motivates us to introduce the following log-variant of $\fil D H^1(U)$.

\begin{defi} \label{gloram.def2}
We define $\fillog D H^1(U)$ as
\[
\fillog D H^1(U) = \Ker\big(H^1(U) \to \underset{\lam\in I}{\bigoplus}\; H^1(\Klam)/\fillog {\mlam} H^1(\Klam)\big).
\]
\end{defi}

\begin{lem}\label{CFT.lem1} \mbox{}
\begin{itemize}
\item[(1)]
$\fil C H^1(U)$ is the subgroup of tamely ramified characters.
\item[(2)]
$\fil D H^1(U)\subset \fillog D H^1(U) \subset \fil {D+C} H^1(U)$.
\item[(3)]
$\fil D H^1(U)=\fillog {D-C} H^1(U)$ if $(\mlam,p)=1$ for all $\lam\in I$.
\end{itemize}
\end{lem}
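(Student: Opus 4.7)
The strategy in all three parts is to reduce to the local Lemma~\ref{CFT.lem0} via the characterization of both filtrations as kernels of restriction maps to the henselizations at generic points of $C$. Recall from Proposition~\ref{gloram.prop1} and Definition~\ref{gloram.def2} that
\[
\chi\in\fil{D}H^1(U) \iff \chi|_{\Klam}\in\fil{\mlam}H^1(\Klam)\ \text{for every }\lam\in I,
\]
and similarly with ``$\log$'' on both sides. Thus each of (1)-(3) will follow by checking the analogous statement at each $\lam$ separately and intersecting the resulting conditions.

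For (1), when $D=C$ every $\mlam=1$, so the condition becomes $\chi|_{\Klam}\in\fil{1}H^1(\Klam)$ for every $\lam\in I$. By Lemma~\ref{CFT.lem0}(1) this is exactly the condition that $\chi$ is tamely ramified at each generic point of $C$. Since $C$ is a simple normal crossing divisor on the smooth variety $X$, tameness of $\chi\in H^1(U)$ is detected by its restriction to the generic points of $C$ (this is Abhyankar-style purity for the tame fundamental group on a regular scheme with good boundary), so this matches the usual global notion of tame ramification. This identification is the only non-formal point in the argument, and it is where I expect a careful reference to be needed.

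For (2), apply the local chain $\FH{\Klam}{\mlam}\subset\FHlog{\Klam}{\mlam}\subset\FH{\Klam}{\mlam+1}$ from Lemma~\ref{CFT.lem0}(2) at each $\lam\in I$. Intersecting the induced conditions on $\chi\in H^1(U)$ over all $\lam$ yields the inclusions
\[
\fil{D}H^1(U)\ \subset\ \fillog{D}H^1(U)\ \subset\ \fil{D+C}H^1(U),
\]
since $D+C=\sum(\mlam+1)\Clam$.

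For (3), if $(\mlam,p)=1$ for every $\lam\in I$, then Lemma~\ref{CFT.lem0}(3) gives $\FH{\Klam}{\mlam}=\FHlog{\Klam}{\mlam-1}$ at each $\lam$. Since $D-C=\sum(\mlam-1)\Clam$, intersecting these local equalities produces $\fil{D}H^1(U)=\fillog{D-C}H^1(U)$. No further work is needed beyond matching indices.
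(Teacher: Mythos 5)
Your proposal is correct and follows exactly the route the paper intends: the paper's proof is the one-line remark that the lemma is a direct consequence of Lemma~\ref{CFT.lem0}, the implicit mechanism being precisely the reduction via Proposition~\ref{gloram.prop1} and Definition~\ref{gloram.def2} to the local filtrations at the generic points of $C$, which you carry out. Your added caution in part (1) about identifying componentwise tameness at the generic points with global tameness (via purity on the regular scheme $X$ with SNC boundary) is a reasonable point that the paper leaves implicit.
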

\begin{proof}
This is a direct consequence of Lemma \ref{CFT.lem0}.
\end{proof}

\bigskip

\section{Proof of the main theorem}\label{Lefschetz-proof}


Let $X$ be a smooth projective variety over a perfect field of characteristic $p>0$ 
and $C\subset X$ a reduced simple normal crossing divisor on $X$.
Let $j:U=X\setminus C\subset X$ be the open immersion.
We use the same notation as in the last part of the previous section.
Take an effective Cartier divisor 
\[\displaystyle{D=\underset{\lambda\in I}{\sum} \mlam \Clam \quad\text{with }\mlam\geq 0}.\]
Let $I'=\{\lam\in I\;|\; p|\mlam\}$ and put
\[\displaystyle{D'=\underset{\lambda\in I'}{\sum} (\mlam+1) \Clam \;+ \;
\underset{\lambda\in I\setminus I'}{\sum} \mlam \Clam}.\]

Let $Y$ be a smooth hypersurface section such that $Y\times_X C$ is a reduced simple normal crossing divisor on $Y$.

\begin{defi}\label{def.ampleD} \mbox{}
\begin{itemize}
\item[(1)]
Assuming $\dim(X)\geq 3$, we say that $Y$ is sufficiently ample for $(X,D)$ if the following conditions hold:
\begin{itemize}
\item[$(A1)$]
$H^i(X,\Omega^d_X(-\Xi+Y))=0$ for any effective Cartier divisor $\Xi \leq D$ and for $i=d,d-1,d-2$.
\item[$(A2)$]
For any $\lam\in I'$, we have
\[
H^0(\Clam,\Omega^{1}_X(D'-Y)\otimes\cO_{\Clam})=H^0(\Clam,\cO_{\Clam}(D'-Y))=H^1(\Clam,\cO_{\Clam}(D'-2Y))=0.
\]
\end{itemize}
\item[(2)]
Assuming $\dim(X)=2$, we say that $Y$ is sufficiently ample for $(X,D)$ if the following condition holds:
\begin{itemize}
\item[$(B)$]
$H^i(X,\Omega^d_X(-\Xi+Y))=0$ for any effective Cartier divisor $\Xi \leq D$ and for $i=1,2$.
\end{itemize}
\end{itemize}
\end{defi}

We remark that there is an integer $N$ such that any smooth $Y$ of degree$\geq N$
is sufficiently ample for $(X,D)$.

\medbreak

Theorem \ref{thm.Lefschetz} is a direct consequence of the following.

\begin{theo}\label{thm.Lefschetz.dual}
Let $Y$ be sufficiently ample for $(X,D)$. Write $E=Y\times_X D$. 
\begin{itemize}
\item[(1)]
Assuming $d:=\dim(X)\geq 3$, we have isomorphisms
\[
\fil D H^1(U) \isom \fil E H^1(U\cap Y)\qaq \fillog D H^1(U) \isom \fillog E H^1(U\cap Y).
\]
\item[(2)]
Assuming $d=2$, we have injections
\[
\fil D H^1(U) \hookrightarrow \fil E H^1(U\cap Y) \qaq \fillog D H^1(U) \hookrightarrow \fillog E H^1(U\cap Y).
\]
\end{itemize}
\end{theo}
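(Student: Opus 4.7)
The plan is to split $H^1(U)$ into its prime-to-$p$ and $p$-primary summands and then run a coherent cohomology comparison via Artin--Schreier--Witt theory. On the prime-to-$p$ part, Lemma \ref{CFT.lem1}(1) identifies both $\fil{D}$ and $\fillog{D}$ with the tame-ramification filtration $\fil{C}$, for which the Lefschetz theorem is already due to Schmidt--Spiess \cite{SS}. I would therefore focus on characters in $H^1(U,\psz)$ and pass to the limit $s\to\infty$ at the end.

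For the $p$-primary part, I would use the global Artin--Schreier--Witt sequence
\[
0\to\psz\to W_s\cO_{U,\et}\xrightarrow{F-1}W_s\cO_{U,\et}\to 0
\]
to identify $H^1(U,\psz)$ with $\coker(F-1)$ on the Witt-vector cohomology of $U$. Matsuda's formula $\fil{m}W_s(K)=\fillog{m-1}W_s(K)+V^{s-s'}\fillog{m}W_{s'}(K)$ globalizes to a coherent subsheaf $\fil{D}W_s\cO_X\subset j_*W_s\cO_U$, filtered by Verschiebung, whose successive quotients are twisted line bundles $\cO_X(\Xi)$ for effective $\Xi\leq D$ (with a shift by $D'$ at the components $\lam\in I'$). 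Proposition \ref{gloram.prop1} places the $\fil{D-\Clam}\subset\fil{D}$ gradeds inside $H^0(\Clam,\Omega^1_X(D)\otimes\cO_{\Clam})$ via the refined Artin conductor $\rsw_{\Clam}$.

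The comparison $\fil{D}H^1(U,\psz)\to\fil{E}H^1(U\cap Y,\psz)$ then follows by a double dévissage: induction on the Witt length $s$, and for each fixed $s$ a second induction on the multiplicities $\mlam$ via $\fil{D-\Clam}\subset\fil{D}$. Applying the 5-lemma to the $F-1$ long exact sequences on $X$ and on $Y$ reduces the question to coherent vanishings. Via the short exact sequence $0\to\cO_X(\Xi-Y)\to\cO_X(\Xi)\to\cO_Y(\Xi)\to 0$ and Serre duality on $X$, condition (A1) translates into the restriction isomorphisms $H^j(X,\cO_X(\Xi))\isom H^j(Y,\cO_Y(\Xi))$ for $j=0,1$ and an injection for $j=2$ over all effective $\Xi\leq D$, which is exactly what the 5-lemma needs; condition (A2) supplies the analogue on the one-dimensional components $\Clam$ at the $p$-jumps $\lam\in I'$. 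In the $d=2$ case, (B) only delivers the injective halves of these statements, giving only injectivity in (2).

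The main obstacle will be the bookkeeping at $\lam\in I'$. Matsuda's non-log filtration differs from Brylinski--Kato's by a $V^{s-s'}\fillog{m}W_{s'}$ correction, so $\fil{D}W_s\cO_X$ is not simply $W_s\cO_X(D)$ but carries an extra Verschiebung summand; identifying its graded pieces and their images under $\rsw_{\Clam}$ is the delicate combinatorial input. The vanishings in Definition \ref{def.ampleD}(A2) are designed precisely to make this correction term restrict compatibly from $X$ to $Y$, and threading the simultaneous comparison of both $\fil{D}$ and $\fillog{D}$ through this correction via Lemma \ref{CFT.lem1}(2)--(3) is where the proof will need the most care.
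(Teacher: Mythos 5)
Your toolkit is the right one (Schmidt--Spiess for the prime-to-$p$ part, Artin--Schreier--Witt sheaves with Serre duality and $(A1)$ for the $p$-part, the refined Artin conductor and $(A2)$ at the components with $p\mid\mlam$), but the architecture you propose has two genuine gaps. The first: after globalizing a filtration to a subsheaf of $j_*W_s\cO_U$ and forming the cone of $1-F$, you still must prove that $H^1(X,-)$ of that cone computes the filtered subgroup of $H^1(U,\bZ/p^s\bZ)$. This is not formal, because the filtration is imposed only at the generic points of $C$; one has to show that the local cohomology $H^2_x$ of the cone vanishes at every $x\in C$ with $\dim(\cO_{X,x})\geq 2$. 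In the paper this is Claim~\ref{pnzDdt.eq} inside Lemma~\ref{lem.FHp}, proved by a separate induction on the multiplicities of $D$ using \cite[Cor.3.10]{Gr}; it is the longest single step of the argument and is entirely absent from your plan.

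The second, more serious gap is that your d\'evissage on multiplicities via $\fil{D-\Clam}\subset\fil{D}$ and $\rsw_{\Clam}$ cannot deliver the surjectivity asserted in part (1). Theorem~\ref{thm.Masuda} and Proposition~\ref{gloram.prop1} give only an \emph{injection} of the graded piece into $H^0(\Clam,\Omega^1_X(D)\otimes_{\cO_X}\cO_{\Clam})$, with no description of the image, so a five-lemma argument threaded through these $H^0$'s yields at best injectivity of $\fil D H^1(U)\to\fil E H^1(U\cap Y)$. The ``Verschiebung correction'' you flag as delicate is precisely why the paper never sheafifies the non-log filtration at all: it sheafifies only the log filtration $\fillog D \WnOX$, where $F$, $V$, $R$ behave well and the relevant graded pieces are honest line bundles, proves $H^1(X,\pnzD)\isom H^1(Y,\pnzE)$ there by induction on the Witt length alone (Lemmas~\ref{lem0.FHp} and \ref{lem1.Lefschetz}, using $(A1)$ via Serre duality), and then recovers the $\fil{D}$-statement by passing to the auxiliary divisor $D'$ with prime-to-$p$ multiplicities, for which $\fil {D'}H^1(U)=\fillog {D'-C}H^1(U)$ by Lemma~\ref{CFT.lem1}(3). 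In that reduction the quotient $\fil{D'}H^1(U)/\fil{D}H^1(U)$ only needs to be injected compatibly into the coherent groups $H^0(\Clam,\Omega^1_X(D')\otimes_{\cO_X}\cO_{\Clam})$ --- this is where $\rsw_{\Clam}$ and $(A2)$ enter, and where injectivity genuinely suffices. You should adopt that reduction rather than attempting the combinatorics of a sheafified $\fil{D}W_s\cO_X$ directly.
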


For an abelian group $M$, we let $M\{p'\}$ denote the prime-to-$p$ torsion part of $M$.
 
\begin{lem}\label{Lefschetz.tame} 
\begin{itemize}
\item[(1)]
Assuming $d:=\dim(X)\geq 3$, we have an isomorphism
\[
\fil D H^1(U)\{p'\} \isom \fil E H^1(U\cap Y)\{p'\}
\]
and the same isomorphism for $\fillog D$.
\item[(2)]
Assuming $d=2$, we have an injection
\[
\fil D H^1(U)\{p'\} \hookrightarrow \fil E H^1(U\cap Y)\{p'\}
\]
and the same injection for $\fillog D$.
\end{itemize}
\end{lem}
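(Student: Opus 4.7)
The plan is to reduce both claims to the classical prime-to-$p$ Lefschetz theorem for the abelian fundamental group of a smooth open variety. The key observation is that in characteristic $p$ every prime-to-$p$ character of $\piab U$ is tamely ramified, so the ramification bound defining $\fil D$ and $\fillog D$ collapses to something much simpler on $H^1(U)\{p'\}$. Concretely, for any $\chi\in H^1(U)\{p'\}$ the local restriction $\chi|_{K_\lam}$ is tame and hence lies in $\FH{K_\lam}{1}=\FHlog{K_\lam}{0}$ by Lemma~\ref{CFT.lem0}(1) and~(3). Since $\FHlog{K_\lam}{0}\subset\FHlog{K_\lam}{m_\lam}$ for every $m_\lam\geq 0$, Definition~\ref{gloram.def2} immediately yields
\[
\fillog D H^1(U)\{p'\}=H^1(U)\{p'\}.
\]

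For the non-log version I would partition $I=I_0\sqcup I_1$ according to whether $m_\lam=0$ or $m_\lam\geq 1$. For $\lam\in I_1$ one has $\FH{K_\lam}{1}\subset\FH{K_\lam}{m_\lam}$, so the condition at $\lam$ is automatic for prime-to-$p$ $\chi$; for $\lam\in I_0$ it is precisely the condition that $\chi|_{K_\lam}$ be unramified. By purity of the branch locus on the smooth variety $X$, prime-to-$p$ characters of $\piab U$ that are unramified at every $C_\lam$ with $\lam\in I_0$ are exactly the pullbacks of prime-to-$p$ characters of $\piab{X\setminus C_1}$, where $C_1=\bigcup_{\lam\in I_1}C_\lam$. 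Hence
\[
\fil D H^1(U)\{p'\}=H^1(X\setminus C_1,\qz)\{p'\},
\]
and the analogous identifications hold on $Y$, since $Y\cap C$ is a reduced simple normal crossing divisor by hypothesis.

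With these identifications in hand, both claims reduce to showing that the two restriction maps
\[
H^1(X\setminus C_1,\qz)\{p'\}\to H^1(Y\setminus(Y\cap C_1),\qz)\{p'\}\qaq H^1(U,\qz)\{p'\}\to H^1(U\cap Y,\qz)\{p'\}
\]
are isomorphisms for $d\geq 3$ and injections for $d=2$. This is the classical prime-to-$p$ Lefschetz theorem for the abelian fundamental group of a smooth open variety with simple normal crossing boundary, which in the form we need is the main theorem of Schmidt--Spiess \cite{SS} applied to the pairs $(X,C_1)$ and $(X,C)$ with trivial modulus; the ``sufficiently ample'' hypothesis of Definition~\ref{def.ampleD} is comfortably more than what is required. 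The most delicate part of the argument is really the identification step above: one has to match the prime-to-$p$ pieces of $\fil D$ and $\fillog D$ with \'etale cohomology groups of \emph{different} open varieties ($X\setminus C_1$ versus $U$), reflecting the fact that $\FHlog{K_\lam}{0}$ already contains all tame characters whereas $\FH{K_\lam}{0}$ contains only the unramified ones.
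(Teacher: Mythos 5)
Your proof is correct and follows essentially the same route as the paper: both collapse the prime-to-$p$ part of the filtration to a group independent of the multiplicities of $D$ and then invoke the tame Lefschetz theorem of Schmidt--Spiess \cite{SS}. The paper does this in one line via $\fil D H^1(U)\{p'\}=\fil C H^1(U)\{p'\}=\fillog C H^1(U)\{p'\}=\fillog D H^1(U)\{p'\}$ (implicitly taking all $m_\lambda\geq 1$), whereas your case distinction $I=I_0\sqcup I_1$ also handles components of multiplicity zero, where the local condition is genuinely unramifiedness rather than tameness and the purity argument is needed.
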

\begin{proof}
Noting 
\[
\fil D H^1(U)\{p'\} =\fil C H^1(U)\{p'\} = \fillog C H^1(U)\{p'\} =\fillog D H^1(U)\{p'\},
\]
this follows from the tame case of Theorem \ref{thm.Lefschetz} due to \cite{SS}.
\end{proof}
\medbreak

By the above lemma, Theorem \ref{thm.Lefschetz.dual} is reduced to the following.

\begin{theo}\label{thm.Lefschetz.dual2}
Let the assumption be as in Theorem~\ref{thm.Lefschetz.dual}. Take an integer $n>0$.
\begin{itemize}
\item[(1)]
Assuming $d:=\dim(X)\geq 3$, we have isomorphisms
\[
\fil D H^1(U)[p^n] \isom \fil E H^1(U\cap Y)[p^n]
\]
and the same isomorphism for $\fillog D$.
\item[(2)]
Assuming $d=2$, we have an injection
\[
\fil D H^1(U)[p^n] \hookrightarrow \fil E H^1(U\cap Y)[p^n]
\]
and the same injection for $\fillog D$.
\end{itemize}
\end{theo}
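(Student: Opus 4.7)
The approach is to reduce the assertion to Serre-type vanishing via the Artin-Schreier-Witt sequence. I would globalize Matsuda's ramification filtration of Section \ref{Lefschetz-RT} by constructing coherent subsheaves $\fil D W_n\cO_X, \fillog D W_n\cO_X \subset j_*W_n\cO_U$ whose stalks at each generic point $\lam\in I$ of $C$ recover the local filtrations $\fil{m_\lam}W_n(K_\lam)$ and $\fillog{m_\lam}W_n(K_\lam)$. Because $v_K(a^p)=pv_K(a)$ on Witt-vector components, Frobenius carries $\fil D$ into $\fil{pD}$ (and similarly in the log case), producing a two-term complex
$$K_D^\bullet := \bigl[\,\fil D W_n\cO_X \xrightarrow{1-F} \fil{pD}W_n\cO_X\,\bigr]$$
concentrated in degrees $0$ and $1$, together with its log analogue $K_D^{\log,\bullet}$. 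Combining Artin-Schreier-Witt on $U$ with Proposition \ref{gloram.prop1}, I expect the identifications
$$\fil D H^1(U)[p^n] \cong \hH^1(X, K_D^\bullet) \qaq \fillog D H^1(U)[p^n] \cong \hH^1(X, K_D^{\log,\bullet}).$$

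With these identifications in hand, the map $\iota_{Y/X}^*$ on $p^n$-torsion arises from the short exact sequence of complexes $0 \to K_D^\bullet(-Y) \to K_D^\bullet \to K_E^\bullet \to 0$ (using that $Y$ is transverse to $C$, so $E = Y\times_X D$ and the filtrations restrict compatibly), and its kernel and cokernel are controlled by $\hH^1(X, K_D^\bullet(-Y))$ and $\hH^2(X, K_D^\bullet(-Y))$ respectively. I would then reduce these hypercohomology groups to ordinary coherent cohomology by a double induction: first on $n$ using the Verschiebung sequence $0 \to \cO_X \xrightarrow{V^{n-1}} W_n\cO_X \to W_{n-1}\cO_X \to 0$, and then, for fixed $n$, on the divisor $D$ via the refined Artin conductor embedding of Proposition \ref{gloram.prop1}. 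The coherent cohomology groups that ultimately need to vanish are $H^i(X, \cO_X(\Xi - Y))$ for $\Xi \le D$, which by Serre duality equal $H^{d-i}(X, \Omega^d_X(-\Xi + Y))^\vee$ and vanish by (A1) when $d\ge 3$ and by (B) when $d=2$, together with $H^i(C_\lam, \Omega^1_X(D)\otimes\cO_{C_\lam}(-Y))$ and $H^i(C_\lam, \cO_{C_\lam}(D' - Y))$, which vanish by (A2). The shift from $D$ to $D'$ in (A2) reflects the fact that $1-F$ raises the filtration level by a factor of $p$, producing a shift at components $\lam \in I'$ where $p\mid m_\lam$.

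The main obstacle lies in the first step: correctly globalizing Matsuda's non-log filtration $\fil{}W_n$ (itself built from $\fillog{}$ and a Verschiebung term) and establishing the hypercohomology identification $\fil D H^1(U)[p^n]\cong \hH^1(X, K_D^\bullet)$. This hinges on Theorem \ref{thm.Masuda} and on a careful analysis of how $1-F$ interacts with the refined Artin conductor at components where $p\mid m_\lam$; in particular the appearance of $D'$ in (A2) must be derived from a component-by-component computation of the target of Frobenius on the globalized filtration. Once this identification is in place, the remainder of the proof is a standard long-exact-sequence argument combined with the Serre vanishing directly provided by (A1), (A2), and (B).
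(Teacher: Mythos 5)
Your overall architecture for the \emph{log} part agrees with the paper: one globalizes $\fillog{}$ to subsheaves $\fillog{D}\WnOX\subset j_*\WO nU$, forms the two-term complex $\pnzD=\Cone\bigl(\fillog{D/p}\WnOX\rmapo{1-F}\fillog{D}\WnOX\bigr)[-1]$, identifies $\fillog DH^1(U)[p^n]$ with $H^1(X,\pnzD)$, and compares $X$ with $Y$ through the cone $\cK=\Cone\bigl(\cO_X(D/p-Y)\rmapo{1-F}\cO_X(D-Y)\bigr)[-1]$, whose cohomology vanishes by $(A1)$/$(B)$ and Serre duality after reducing to $n=1$ with the Verschiebung sequence. (Note your indexing is off: since $F$ multiplies pole orders by $p$, the complex computing level-$D$ characters must have \emph{target} $\fil D$, i.e.\ it is $[\fil{D/p}\to\fil D]$, not $[\fil D\to\fil{pD}]$; as written your $K^\bullet_D$ computes the level-$pD$ group.) However, you are missing a substantive step in the identification $\fillog DH^1(U)[p^n]\cong H^1(X,\pnzD)$: Proposition \ref{gloram.prop1} only controls ramification at the generic points $\lam$ of $C$, so the localization sequence forces you to prove the purity statement $H^2_x(X,\pnzD)=0$ for all $x\in C$ with $\dim\cO_{X,x}\ge 2$ (equivalently, injectivity of $H^2_C(X,\pnzD)\to\bigoplus_\lam H^2_\lam(X,\pnzD)$). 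This occupies most of the paper's proof of Lemma \ref{lem.FHp}: reduce to $n=1$, split off the constant sheaf $\pzX$, and induct on the multiplicities of $D$ using Grothendieck's local cohomology vanishing. Your proposal does not mention this point at all.

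The second, larger gap is the non-log case. You propose to globalize Matsuda's $\fil{}W_n$ as a sheaf and rerun the cone argument, and you correctly identify the interaction of $1-F$ with the Verschiebung term $V^{s-s'}\fillog{m}W_{s'}$ (where $s'$ depends on $\ord_p(m_\lam)$, hence varies with the component) as the obstacle --- but you leave it unresolved, and the paper deliberately avoids this route. Instead it sets $D'=\sum_{p\mid\mlam}(\mlam+1)\Clam+\sum_{p\nmid\mlam}\mlam\Clam$, uses Lemma \ref{CFT.lem1}(3) to rewrite $\fil{D'}H^1(U)=\fillog{D'-C}H^1(U)$ so that the already-proved log statement gives the theorem for $\fil{D'}$; injectivity for $\fil D\subset\fil{D'}$ is then immediate, and surjectivity (for $d\ge 3$) is recovered by showing $\fil{D'}H^1(U)/\fil DH^1(U)\to\fil{E'}H^1(U\cap Y)/\fil EH^1(U\cap Y)$ is injective via the refined Artin conductor embedding into $H^0(\Clam,\Omega^1_X(D')\otimes\cO_{\Clam})$ and the vanishings of $(A2)$. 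In particular your explanation of the shift from $D$ to $D'$ in $(A2)$ (``$1-F$ raises the filtration level by a factor of $p$'') is not the right mechanism: $D'$ arises from the comparison $\fil m=\fillog{m-1}$ for $p\nmid m$ versus $\fil m\subset\fillog m=\fil{m+1}$ for $p\mid m$, i.e.\ from sandwiching the non-log filtration between log ones, and $(A2)$ is what makes the resulting graded piece inject after restriction to $Y$.
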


\bigskip

In what follows we consider an effective Cartier divisor with $\Zop$-coefficient:
\[
D=\underset{\lambda\in I}{\sum} \mlam \Clam,\quad \mlam\in \Zop_{\geq 0}.
\]
We put
\[
[D]=\underset{\lambda\in I}{\sum} [\mlam] \Clam\qwith
[\mlam]=\max\{i\in \bZ\;|\; i\leq \mlam \}
\]
and $\cF(\pm D)=\cF\otimes_{\cO_X} \cO_X(\pm [D])$ for an $\cO_X$-module.
For $D$ as above, let $\fillog D \WnOX$ be the subsheaf of $j_*\WO n U$ of local sections
\[\underline{a} \in \WO n U \quad\text{such that }
\underline{a} \in \fillog {\mlam} W_n(\Klam)\qfor \text{any }\lam\in I,
\]
where $\fillog {\mlam} W_n(\Klam):=\fillog {[\mlam]} W_n(\Klam)$ is defined in \S\ref{Lefschetz-RT} for the henselization $\Klam$ of $K=k(X)$ at $\lam$. 
We note
\[
\cO_X(D)=\fillog D \WnOX \qfor n=1.
\]
The following facts are easily checked:
\begin{itemize}
\item
The Frobenius $F$ induces $F:\fillog {D/p} \WnOX\to \fillog D \WnOX$.
\item
The Verschiebung $V$ induces $V:\fillog {D} \WO {n-1} X \to \fillog D \WnOX$.
\item
The restriction $R$ induces $R:\fillog {D} \WnOX \to \fillog {D/p} \WO {n-1} X$.
\item
The following sequence is exact:
\begin{equation}\label{eq.red}
0\to \cO_X(D) \rmapo{V^{n-1}} \fillog {D} \WnOX \rmapo{R} \fillog {D/p} \WO {n-1} X \to 0.
\end{equation}
\end{itemize}

We define an object $\pnzD$ of the derived category $D^b(X)$ of bounded complexes of \'etale sheaves on $X$:
\[
\pnzD = \Cone\big(\fillog {D/p} \WnOX \rmapo{1-F} \fillog D \WnOX\big)[-1].
\]
We have a distinguished triangle in $D^b(X)$:
\begin{equation}\label{pnzDdt}
\pnzD \to \fillog {D/p} \WnOX \rmapo{1-F} \fillog D \WnOX \rmapo{+}.
\end{equation}

\begin{lem}\label{lem0.FHp}
There is a distinguished triangle
\[
\pzD \to \pnzD \to \ppzX {n-1} {D/p} \rmapo{+}.
\]
\end{lem}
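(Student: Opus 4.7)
The plan is to exhibit the triangle as the one associated to a short exact sequence of two-term complexes, obtained by applying \eqref{eq.red} both to $D$ and to $D/p$. Recall that, by the definition of $\pnzD$ via $\mathrm{Cone}(\cdots)[-1]$, this object coincides with the two-term complex $[\fillog{D/p}\WnOX \rmapo{1-F} \fillog{D}\WnOX]$ placed in degrees $0$ and $1$. Similarly $\pzD$ is the analogous complex for $n=1$, and $\ppzX{n-1}{D/p}$ is obtained by replacing $n$ by $n-1$ and $D$ by $D/p$, so that its degree-$0$ term is $\fillog{D/p^2}\WO{n-1}X$ and its degree-$1$ term is $\fillog{D/p}\WO{n-1}X$.

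Applying \eqref{eq.red} to $D$ and to $D/p$ (so that $(D/p)/p = D/p^2$) yields the two short exact sequences
\[
0 \to \cO_X(D/p) \rmapo{V^{n-1}} \fillog{D/p}\WnOX \rmapo{R} \fillog{D/p^2}\WO{n-1}X \to 0,
\]
\[
0 \to \cO_X(D) \rmapo{V^{n-1}} \fillog{D}\WnOX \rmapo{R} \fillog{D/p}\WO{n-1}X \to 0.
\]
I will arrange these as the two columns of a $3 \times 2$ diagram, with the sequence for $D/p$ giving the degree-$0$ column and the sequence for $D$ the degree-$1$ column, connected by horizontal maps $1-F$ on each of the three rows. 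To upgrade this to a short exact sequence of two-term complexes, it remains to verify that $V^{n-1}$ and $R$ each commute with $1-F$. Commutation with the identity is trivial; $R$ commutes with $F$ by the naturality of the Witt vector restriction; and $V^{n-1}$ commutes with $F$ by iterating the identity $FV = VF$ on Witt vectors of an $\F_p$-algebra (both sides equal multiplication by $p$), combined with the fact that $F$ on $\cO_X = W_1(\cO_X)$ is the Frobenius $a \mapsto a^p$.

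Once these compatibilities are in hand, the three rows of the diagram are exactly the two-term complexes computing $\pzD$, $\pnzD$, and $\ppzX{n-1}{D/p}$, so the diagram gives a short exact sequence of complexes
\[
0 \to \pzD \to \pnzD \to \ppzX{n-1}{D/p} \to 0,
\]
whose associated distinguished triangle in $D^b(X)$ is precisely the one in the statement. The main technical obstacle is the Witt-vector commutativity check above; the filtration compatibilities come for free from \eqref{eq.red}.
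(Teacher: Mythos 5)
Your proof is correct and is essentially the paper's own argument: the paper likewise obtains the triangle from the $3\times 2$ commutative diagram whose rows are the sequence \eqref{eq.red} for $D/p$ and for $D$ and whose vertical maps are $1-F$, read as a short exact sequence of the two-term complexes representing $\pzD$, $\pnzD$, and $\ppzX{n-1}{D/p}$. The only difference is that you transpose the diagram and spell out the commutation of $1-F$ with $V^{n-1}$ and $R$, which the paper leaves implicit.
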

\begin{proof}
The lemma follows from the commutative diagram
\[
\xymatrix{
0\ar[r] & \cO_X(D/p) \ar[r]^{V^{n-1}}\ar[d]^{1-F} & \fillog {D/p} \WnOX
\ar[r]^(0.45){R}\ar[d]^{1-F} & \fillog {D/p^2} \WO {n-1} X \ar[d]^{1-F} \ar[r] &0\\
0\ar[r] & \cO_X(D) \ar[r]^{V^{n-1}}  & \fillog {D} \WnOX \ar[r]^(0.45){R}  & \fillog {D/p} \WO
{n-1} X \ar[r] & 0\\
}
\]
\end{proof}

\medbreak

\begin{lem}\label{lem.FHp}
There is a canonical isomorphism
\[
\fillog D H^1(U)[p^n] \simeq H^1(X,\pnzD).
\]
\end{lem}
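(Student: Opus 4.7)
The plan is to construct a natural comparison map $\theta\colon H^1(X, \pnzD) \to H^1(U, \pnz)$ and show that it is injective with image exactly $\fillog D H^1(U)[p^n]$. To build $\theta$, observe that the inclusions $\fillog{D/p}\WnOX,\, \fillog D \WnOX \hookrightarrow j_* W_n\cO_U \to Rj_* W_n\cO_U$ commute with $1-F$, hence induce a morphism in $D^b(X_\et)$ from the defining triangle \eqref{pnzDdt} of $\pnzD$ to the Artin-Schreier-Witt triangle
\[
Rj_*\pnz \to Rj_* W_n\cO_U \xrightarrow{1-F} Rj_* W_n\cO_U \to Rj_*\pnz[1].
\]
Applying $R\Gamma(X_\et, -)$ and using $H^\bullet(X, Rj_*\pnz) = H^\bullet(U, \pnz)$ produces $\theta$ together with a commutative ladder of long exact sequences.

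To identify $\Image(\theta)$ with $\fillog D H^1(U)[p^n]$, I would localize at each generic point $\lam \in I$ of $C$. The henselian stalk of $\fillog D \WnOX$ at $\lam$ is by definition $\fillog{\mlam} W_n(\Klam) \cap W_n(\cO_{X,\lam}^h)$, so combining the local Artin-Schreier-Witt isomorphism \eqref{ASW.eq} with the identification $\fillog{\mlam} H^1(\Klam)[p^n] = \delta_n(\fillog{\mlam} W_n(\Klam))$ shows that the restriction of any class in $\Image(\theta)$ to $\Klam$ lies in $\fillog{\mlam} H^1(\Klam)$. Hence $\Image(\theta) \subset \fillog D H^1(U)[p^n]$ by Definition \ref{gloram.def2}. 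Conversely, any $\chi \in \fillog D H^1(U)[p^n]$ is represented near each $\lam$ by a Witt vector in $\fillog{\mlam} W_n(\cO_{X,\lam}^h)$, and these local representatives glue through the long exact sequence to lift $\chi$ to a class in $H^1(X, \pnzD)$.

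The main obstacle will be the injectivity of $\theta$. Via the five lemma applied to the ladder above, this reduces to two local-at-$C$ statements: (i) the map on $H^0$-cokernels
\[
H^0(X, \fillog D \WnOX)/(1-F)H^0(X, \fillog{D/p}\WnOX) \to H^0(U, W_n\cO_U)/(1-F)H^0(U, W_n\cO_U)
\]
is injective; and (ii) the kernel of $(1-F)$ on $H^1(X, \fillog{D/p}\WnOX)$ injects into its counterpart on $H^1(U, W_n\cO_U)$. Statement (i) boils down, at each $\lam$, to showing that if $a \in \fillog{\mlam} W_n(\Klam)$ equals $(1-F)b$ for some $b \in W_n(\Klam)$, then $b \in \fillog{\mlam/p} W_n(\Klam)$; this can be proved by induction on the valuations of the Witt components, using that $F$ multiplies valuations by $p$, so that a hypothetical $b \notin \fillog{\mlam/p}$ would force $Fb$, and hence $(1-F)b = a$, to violate $\fillog{\mlam}$. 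Statement (ii) is a parallel local argument relying on the $F$-stability of the Brylinski-Kato filtration recalled in Section~\ref{Lefschetz-RT}. Once (i) and (ii) are verified, the isomorphism $H^1(X, \pnzD) \simeq \fillog D H^1(U)[p^n]$ follows.
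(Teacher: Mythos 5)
Your setup---the comparison map $\theta$ obtained by restricting to $U$, and the identification of the local condition at each generic point $\lam$ of $C$ with membership in $\fillog{\mlam}H^1(\Klam)$---matches the paper's strategy, which uses the localization sequence $H^1(X,\pnzD)\to H^1(U,\pnz)\to H^2_C(X,\pnzD)$ together with the computation $H^2_\lam(X,\pnzD)\simeq H^1(\Klam)[p^n]/\fillog{\mlam}H^1(\Klam)[p^n]$ via \cite[Cor.3.10]{Gr}. The injectivity of $\theta$ is not the main issue either: your statement (i), that $(1-F)b\in\fillog{m}W_n(\Klam)$ forces $b\in\fillog{m/p}W_n(\Klam)$, is the standard Brylinski--Kato fact, and the paper absorbs this point into the same local-cohomology computation.

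The genuine gap is in the surjectivity step, exactly at the sentence ``these local representatives glue through the long exact sequence to lift $\chi$.'' The condition defining $\fillog{D}H^1(U)$ constrains $\chi$ only at the codimension-one points of $C$, whereas the obstruction to lifting $\chi$ to $H^1(X,\pnzD)$ lives in $H^2_C(X,\pnzD)$, which a priori receives contributions from points of $C$ of codimension $\geq 2$ in $X$. What you must prove, and what your proposal never addresses, is that the natural map $H^2_C(X,\pnzD)\to\bigoplus_{\lam\in I}H^2_\lam(X,\pnzD)$ is injective, equivalently that $H^2_x(X,\pnzD)=0$ for every $x\in C$ with $\dim(\cO_{X,x})\geq 2$. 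This is where essentially all of the work in the paper's proof is concentrated: one reduces to $n=1$ by Lemma \ref{lem0.FHp}, splits off the constant sheaf $\pzX$ (using $H^2_x(X,\pzX)=0$ from \cite{SGA1}), and then proves $H^2_x(X,\cF_{X|D})=0$ for the cokernel sheaf $\cF_{X|D}=\Coker\bigl(1-F:\cO_X(D/p)\to\cO_X(D)\bigr)$ by induction on the multiplicities of $D$, peeling off one component $\Clam$ at a time and invoking Grothendieck's local-cohomology vanishing for the resulting locally free quotients on $\Clam$. Without an argument of this kind your gluing step does not go through, so the proof is incomplete as written.
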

\begin{proof}
Noting that the restriction of $\pnzD$ to $U$ is $\pnz$ on $U$, we have the localization exact sequence
\begin{equation}\label{pzpls}
 H^1(X,\pnzD) \to H^1(U,\pnz) \to  H^2_C(X,\pnzD).
\end{equation}
For the generic point $\lam$ of $\Clam$, \eqref{pnzDdt} gives us an exact sequence
\[
H^1_\lam(X,\fillog {D/p}\WnOX) \rmapo{1-F} H^1_\lam(X,\fillog D\WnOX) \to H^2_\lam(X,\pnzD) \to 
H^2_\lam(X,\fillog {D/p} \WnOX).
\]
By \cite[Cor.3.10]{Gr} and \eqref{eq.red} we have
\[
H^i_\lam(X,\fillog {D/p}\WnOX)= H^i_\lam(X,\fillog {D}\WnOX)= 0\qfor i\geq 2
\]
and 
\[
\begin{aligned}
&H^1_\lam(X,\fillog {D/p}\WnOX) \simeq W_n(\Klam)/\fillog {\mlam/p} W_n(\Klam), \\
&H^1_\lam(X,\fillog {D}\WnOX) \simeq W_n(\Klam)/\fillog {\mlam} W_n(\Klam).
\end{aligned}
\]
Thus we get 
\[
 H^2_\lam(X,\pnzD) \simeq H^1(\Klam)[p^n]/\fillog {\mlam} H^1(\Klam)[p^n].
\]
Hence Lemma \ref{lem.FHp} follows from \eqref{pzpls} and the injectivity of 
\[
H^2_C(X,\pnzD) \to \underset{\lam\in I}{\bigoplus} \; H^2_\lam(X,\pnzD).
\]

This injectivity is a consequence of

\begin{claim}\label{pnzDdt.eq}
For $x\in C$ with $\dim(\cO_{X,x})\geq 2$ we have
\begin{equation*}
H^2_x(X,\pnzD)=0.
\end{equation*}
\end{claim}

By Lemma \ref{lem0.FHp} it suffices to show Claim~\ref{pnzDdt.eq} in case $n=1$.
Triangle~\eqref{pnzDdt} gives us an exact sequence
\[
H^1_x(X,\cO_X(D)) \to H^2_x(X,\pzD) \to H^2_x(X,\cO_X(D/p)) \rmapo{1-F} H^2_x(X,\cO_X(D)).
\]
If $\dim(\cO_{X,x})> 2$, $H^1_x(X,\cO_X(D))=0$ and $H^2_x(X,\cO_X(D/p))=0$ by \cite[Cor.3.10]{Gr}, 
which implies $H^2_x(X,\pzD)=0$ as desired. 

We now assume $\dim(\cO_{X,x})=2$. 
Let $\pzX$ denote the constant sheaf $\pz$ on $X$ and put
\[
\cF_{X|D} =\Coker\big(\cO_X(D/p) \rmapo{1-F} \cO_X(D)\big).
\]
Note that $\cF_{X|D}=0$ for $D=0$.
By definition we have a distinguished triangle
\[
\pzX \to \pzD \to \cF_{X|D}  \rmapo{+} .
\]
By \cite[X, Theorem 3.1]{SGA1}, we have $H^2_x(X,\pzX)=0$.
Hence we are reduced to showing 
\begin{equation}\label{FXD}
H^2_x(X,\cF_{X|D})=0.
\end{equation}
Without loss of generality we can assume that $D$ has integral coefficients.
We prove \eqref{FXD} by induction on multiplicities of $D$ reducing to the case $D=0$.
Fix an irreducible component $\Clam$ of $C$ with the multiplicity $\mlam\geq 1$ in $D$ and put $D'=D-\Clam$. 
We have a commutative diagram with exact rows and columns
\[
\xymatrix{
& \pzX \ar[d] & \pzX \ar[d] \\
0\ar[r] & \cO_X(D'/p) \ar[r]\ar[d]^{1-F} & \cO_X(D/p) \ar[r]\ar[d]^{1-F} & \cL  \ar[d]^{F} \ar[r] &0\\
0\ar[r] & \cO_X(D') \ar[r]  & \cO_X(D) \ar[r]  & \cO_{\Clam}(D) \ar[r] & 0\;.\\
}
\]
Here $\cO_{\Clam}(D)=\cO_{X}(D)\otimes \cO_{\Clam}$, and $\cL=\cO_{\Clam}(D/p)$ if $p|\mlam$, and $\cL=0$ otherwise. 
Thus we get short exact sequences
\[
0\to \cF_{X|D'} \to \cF_{X|D} \to \cO_{\Clam}(D) \to 0 \quad\text{if }p\not|\mlam,
\]
\[
0\to \cF_{X|D'} \to \cF_{X|D} \to \cO_{\Clam}(D)/\cO_{\Clam}(D/p)^p \to 0 \quad\text{if }p|\mlam.
\]
We may assume $H^2_x(X,\cF_{X|D'})=0$ by the induction hypothesis.
Hence \eqref{FXD} follows from 
\begin{equation}\label{FXD2-2}
H^2_x(\Clam,\cO_{\Clam}(D))=0,
\end{equation}
\begin{equation}\label{FXD2-3}
H^2_x(\Clam,\cO_{\Clam}(D)/\cO_{\Clam}(E)^p)=0,
\end{equation}
where we put $E=[D/p]$.
We may assume $x\in \Clam$ so that $\dim(\cO_{\Clam,x})=1$ by the assumption $\dim(\cO_{X,x})=2$.
\eqref{FXD2-2} is a consequence of \cite[Cor.3.10]{Gr}.
In view of an exact sequence
\[
0\ \to \cO_{\Clam}(p E)/\cO_{\Clam}(E)^p \to \cO_{\Clam}(D)/\cO_{\Clam}(E)^p \to \cO_{\Clam}(D)/\cO_{\Clam}(p E)
\to 0\;,
\]
\eqref{FXD2-3} follows from
\[
H^2_x(\Clam,\cO_{\Clam}(p E)/\cO_{\Clam}(E)^p)=0\qaq
H^2_x(\Clam,\cO_{\Clam}(D)/\cO_{\Clam}(p E))=0.
\]
The first assertion follows from \cite[Cor.3.10]{Gr} noting that
$\cO_{\Clam}(p E)/\cO_{\Clam}(E)^p$ is a locally free $\cO_{\Clam}^p$-module. 
The second assertion holds since $\cO_{\Clam}(D)/\cO_{\Clam}(p E)$ is supported
in a proper closed subscheme $T$ of $\Clam$ and $x$ is a generic point of $T$ if $x\in T$.
This completes the proof of Lemma \ref{lem.FHp}.
$\square$
\end{proof}
\bigskip

In view of the above results, the assertions for $\fillog D$ of Theorem \ref{thm.Lefschetz.dual2}(1) and (2) 
follows from the following.

\begin{theo}\label{lem2.Lefschetz}
Let the assumption be as in Theorem \ref{thm.Lefschetz.dual}. The natural map
\[
H^1(X,\pnzD) \to H^1(Y,\pnzE)
\]
is an isomorphism for $d:=\dim(X)\geq 3$, and it is injective for $d=2$.
\end{theo}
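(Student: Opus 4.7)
The plan is to argue by induction on $n \geq 1$, proving a strengthened statement: when $d \geq 3$, the map $H^i(X,\pnzD) \to H^i(Y,\pnzE)$ is an isomorphism for $i = 0, 1$ and injective for $i = 2$; when $d = 2$, it is an isomorphism for $i = 0$ and injective for $i = 1$. The extra cohomological degrees are needed in order to close the induction via the five-lemma; the desired statement is the $i = 1$ component.

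For the base case $n = 1$, I will use the defining distinguished triangle $\pzD \to \cO_X(D/p) \xrightarrow{1-F} \cO_X(D) \xrightarrow{+}$, which restricts compatibly to its analogue on $Y$ thanks to the transversality of $Y$ with $C$ (so that $\cO_X([D/p])|_Y = \cO_Y([E/p])$ and similarly for $[D]$, and Frobenius commutes with restriction). Comparing the resulting long exact sequences on $X$ and $Y$ and applying the five- or four-lemma in each of the required degrees, the task reduces to showing that the restriction map $H^i(X, \cO_X(\Xi)) \to H^i(Y, \cO_Y(\Xi|_Y))$ is an isomorphism for $i = 0, 1$ and (when $d \geq 3$) injective for $i = 2$, where $\Xi \in \{[D/p], [D]\}$. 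From the short exact sequence $0 \to \cO_X(\Xi - Y) \to \cO_X(\Xi) \to \cO_Y(\Xi|_Y) \to 0$, this is equivalent to the vanishing $H^i(X, \cO_X(\Xi - Y)) = 0$ in the corresponding range; Serre duality rewrites it as $H^{d-i}(X, \Omega^d_X(-\Xi + Y)) = 0$, which matches condition $(A1)$ (resp.\ $(B)$) from Definition~\ref{def.ampleD} exactly, since $[D/p], [D] \leq D$.

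For the inductive step, I will apply the triangle $\pzD \to \pnzD \to \ppzX{n-1}{D/p} \xrightarrow{+}$ from Lemma~\ref{lem0.FHp}, which again restricts compatibly to $Y$. The resulting map of long exact sequences mixes the cohomology of $\pzD / \pzE$ (handled by the base case) with that of $\ppzX{n-1}{D/p} / \pYEp{n-1}$ (handled by the induction hypothesis applied to the pair $(X, [D/p])$; note that conditions $(A1)/(B)$ for $D$ imply the same for $[D/p]$, since they quantify over all effective Cartier $\Xi \leq D$). Degree-by-degree applications of the five- and four-lemmas then yield the strengthened claim at level $n$.

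The principal technical point is identifying the correct strengthened hypothesis. The LES from Lemma~\ref{lem0.FHp} couples the $H^1$-iso at level $n$ to the $H^0$-iso of $\ppzX{n-1}{D/p}$ (coming from the induction hypothesis) and to the $H^2$-injection of $\pzD$ (coming from the base case); analogous couplings occur in degrees $0$ and $2$. Tracking only $H^1$ would therefore not close, which is why all three degrees must be carried along. Once this bookkeeping is in place, the whole argument reduces mechanically to the ampleness vanishings encoded in Definition~\ref{def.ampleD}, via the Serre-duality translation sketched above.
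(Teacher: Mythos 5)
Your proposal is correct and follows essentially the same route as the paper: induction on $n$ via the triangle of Lemma~\ref{lem0.FHp}, with the case $n=1$ reduced through Serre duality to the vanishing $H^i(X,\cO_X(\Xi-Y))=0$ guaranteed by $(A1)$ resp.\ $(B)$ --- the paper merely packages the $n=1$ comparison as the vanishing of the cone $\cK=\Cone\big(\cO_X(D/p-Y)\rmapo{1-F}\cO_X(D-Y)\big)[-1]$ over the map $\pzD\to\pzE$, which is equivalent to your five-lemma argument. Your explicit tracking of the $H^0$-isomorphisms through the induction is in fact slightly more careful bookkeeping than the paper's, whose diagram places a $0$ atop its exact columns without comment.
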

\begin{proof}
By Lemma~\ref{lem0.FHp} we have a commutative diagram:
\[
\xymatrix{
 0 \ar[d] & 0 \ar[d]\\
H^1(X,\pzD) \ar[r] \ar[d]& H^1(Y,\pzE) \ar[d]\\
H^1(X,\pnzD) \ar[r]\ar[d] & H^1(Y,\pnzE) \ar[d] \\
H^1(X,\pXDp {n-1}) \ar[r]\ar[d] & H^1(Y,\pYEp {n-1}) \ar[d] \\ 
H^2(X,\pzD) \ar[r] & H^2(Y,\pzE) \\
}
\] 
The theorem follows by the induction on $n$ from the following.
\end{proof}

\begin{lem}\label{lem1.Lefschetz}
Let the assumption be as Theorem \ref{thm.Lefschetz.dual}.
\begin{itemize}
\item[(1)]
Assuming $d\geq 3$, the natural map 
\[
H^i(X,\pzD) \to H^i(Y,\pzE)
\]
is an isomorphism for $i=1$ and injective for $i=2$.
\item[(2)]
Assuming $d=2$, the natural map 
\[
H^1(X,\pzD) \to H^1(Y,\pzE)
\]
is injective.
\end{itemize}
\end{lem}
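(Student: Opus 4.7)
The plan is to reduce Lemma \ref{lem1.Lefschetz} to coherent cohomology vanishing on $X$ by unfolding the defining triangle of $\pzD$ and then applying Serre duality. First, I apply cohomology to the distinguished triangle
\[
\pzD \to \cO_X(D/p) \xrightarrow{1-F} \cO_X(D) \xrightarrow{+}
\]
and to its pullback along $\iota:Y \hookrightarrow X$. Since $\iota^{*}\cO_X(D)=\cO_Y(E)$, $\iota^{*}\cO_X(D/p)=\cO_Y(E/p)$, and pullback of invertible sheaves is exact, this pulled-back triangle is exactly the one defining $\pzE$. The resulting map of long exact sequences has, on its coherent columns, the restriction maps
\[
\rho^{i}_{?}\colon H^{i}(X,\cO_X(?)) \longrightarrow H^{i}(Y,\cO_Y(?|_{Y})), \qquad ?\in\{D,\, D/p\}.
\]
A routine five-lemma argument then reduces the Lemma to the assertions: when $d \geq 3$, the maps $\rho^{0}_{?}$ and $\rho^{1}_{?}$ are bijective and $\rho^{2}_{?}$ is injective; when $d = 2$, the map $\rho^{0}_{?}$ is bijective and $\rho^{1}_{?}$ is injective.

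Second, I consider the short exact sequence
\[
0 \to \cO_X(?-Y) \to \cO_X(?) \to \iota_{*}\cO_Y(?|_{Y}) \to 0,
\]
obtained by tensoring $0\to\cO_X(-Y)\to\cO_X\to\iota_{*}\cO_Y\to 0$ with the invertible sheaf $\cO_X(?)$. Using that $\iota$ is a closed immersion so $H^{i}(Y,\cO_Y(?|_{Y}))=H^{i}(X,\iota_{*}\cO_Y(?|_{Y}))$, the long exact sequence translates the desired bijectivity/injectivity statements into the single coherent vanishing
\[
H^{i}(X,\cO_X(?-Y)) = 0
\]
for $i\in\{0,1,2\}$ in the case $d\geq 3$, and for $i\in\{0,1\}$ in the case $d=2$.

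Third, Serre duality on the smooth projective $d$-fold $X$ gives
\[
H^{i}(X,\cO_X(?-Y))^{\vee} \cong H^{d-i}(X,\Omega_X^{d}(Y-?)).
\]
Writing $\Xi=[?]\in\{[D],\,[D/p]\}$, we have $\Xi$ effective and $\Xi\leq D$, since $0 \le [m_\lambda/p] \le m_\lambda$ for each $\lambda$. Condition $(A1)$ then kills $H^{d-i}(X,\Omega_X^{d}(-\Xi+Y))$ for $d-i\in\{d,d-1,d-2\}$, that is for $i\in\{0,1,2\}$, yielding part (1); condition $(B)$ kills it for $d-i\in\{1,2\}$, that is for $i\in\{0,1\}$, yielding part (2). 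This is exactly the input the second step requires.

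The main obstacle is little more than organizing the five-lemma carefully and matching the Serre-duality indices to the hypothesis, which is precisely why conditions $(A1)$ and $(B)$ were designed with ranges $\{d,d-1,d-2\}$ and $\{1,2\}$. No further ingredient beyond the construction of $\pzD$ and the tailored definition of \emph{sufficiently ample} is needed.
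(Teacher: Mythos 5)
Your proof is correct and follows essentially the same route as the paper: both unfold the defining triangle of $\pzD$ and reduce, via the sequence $0\to\cO_X(\Xi-Y)\to\cO_X(\Xi)\to\iota_*\cO_Y(\Xi|_Y)\to 0$ and Serre duality, to exactly the vanishings supplied by $(A1)$ and $(B)$. The paper merely packages the comparison as the cone $\cK=\Cone\big(\cO_X(D/p-Y)\xrightarrow{1-F}\cO_X(D-Y)\big)[-1]$ and shows $H^1(X,\cK)=H^2(X,\cK)=0$ directly, instead of running your five-lemma on the two long exact sequences.
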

\begin{proof}
We define an object $\cK$ of $D^b(X)$:
\[
\cK = \Cone\big(\cO_X(D/p-Y) \rmapo{1-F} \cO_X(D-Y) \big)[-1].
\]
By the commutative diagram with exact horizontal sequences:
\[\xymatrix{
 0 \ar[r] & \cO_X(D/p-Y) \ar[r]\ar[d]^{1-F} & \cO_X(D/p) \ar[r]\ar[d]^{1-F} & \cO_Y(E/p) \ar[r]\ar[d]^{1-F} & 0 \\
 0 \ar[r] & \cO_X(D-Y) \ar[r] & \cO_X(D) \ar[r] & \cO_Y(E) \ar[r] & 0 \\
}\]
we have a distinguished triangle in $D^b(X)$:
\[
\cK \to \pzD  \to \pzE \rmapo{+}.
\]
Hence it suffices to show $H^i(X,\cK)=0$ for $i=1,2$ in case $d\geq 3$ and $H^1(X,\cK)=0$ in case $d=2$. 
We have an exact sequence
\begin{align*}
H^0(\cO_X(D-Y)) &\to H^1(X,\cK) \to H^1(\cO_X(D/p-Y))\\
&  \to H^1(\cO_X(D-Y)) 
 \to H^2(X,\cK) \to H^2(\cO_X(D/p-Y))\\
\end{align*}
By Serre duality, for a divisor $\Xi$ on $X$, we have
\[
H^i(X,\cO_X(\Xi-Y)) = H^{d-i}(X,\Omega^d_X(-\Xi+Y))^\vee.
\]
Thus the desired assertion follows from Definition \ref{def.ampleD}$(A1)$ and $(B)$.
\end{proof}
\bigskip

It remains to deduce the assertions for $\fil D$ of Theorem \ref{thm.Lefschetz.dual2}(1) and (2) 
from that for $\fillog D$.
Let $D'$ be as in the beginning of this section and $E'=D'\times_XY$. 
Noting that the multiplicities of $D'$ are prime to $p$, we have by Lemma \ref{CFT.lem1}(3) 
\[
\fil {D'} H^1(U)=\fillog {D'-C} H^1(U) \qaq
\fil {E'} H^1(U\cap Y)=\fillog {E'-C\cap Y} H^1(U\cap Y).
\]
Thus the assertions for $\fillog {D'-C}$ of Theorem \ref{thm.Lefschetz.dual2} implies
that for $\fil {D'}$. Since $\fil D\subset \fil {D'}$, it immediately implies the injectivity of
\[
\fil {D} H^1(U)\to  \fil {E} H^1(U\cap Y).
\]
It remains to deduce its surjectivity from that of 
\[
\fil {D'} H^1(U)\to  \fil {E'} H^1(U\cap Y)
\]
assuming $d\geq 3$. For this it suffices to show the injectivity of 
\[
\fil {D'} H^1(U)/\fil {D} H^1(U) \to \fil {E'} H^1(U\cap Y)/\fil {E} H^1(U\cap Y).
\]
By Proposition \ref{gloram.prop1} we have a commutative diagram
\[\xymatrix{
\fil {D'} H^1(U)/\fil {D} H^1(U) \ar@{^{(}->}[r] \ar[d] & 
\underset{\lam\in I'}{\bigoplus}\; H^0(\Clam,\Omega^1_X(D')\otimes_{\cO_X} \cO_{\Clam}) \ar[d] \\
\fil {E'} H^1(U\cap Y)/\fil {E} H^1(U\cap Y) \ar@{^{(}->}[r]  & 
\underset{\lam\in I'}{\bigoplus}\; H^0(\Clam\cap Y,\Omega^1_Y(D')\otimes_{\cO_Y} \cO_{\Clam\cap Y}) \\
}\]
Thus we are reduced to showing the injectivity of the right vertical map.
Putting $\cL= \Ker(\Omega^1_X \to i_*\Omega^1_Y)$ where $i:Y\subset X$, the assertion follows from
\[
 H^0(\Clam,\cL(D')\otimes_{\cO_X} \cO_{\Clam})=0.
\]
Note that we used the fact that $Y$ and $\Clam$ intersect transversally.
We have an exact sequence
\[
0\to \Omega^1_X(-Y) \to \cL \to \cO_X(-Y)\otimes \cO_Y \to 0.
\]
From this we get an exact sequence
\[
0\to \Omega^1_X(D'-Y)\otimes_{\cO_X} \cO_{\Clam} \to \cL(D')\otimes_{\cO_X} \cO_{\Clam} \to
\cO_{\Clam}(D'-Y)\otimes \cO_{\Clam\cap Y} \to 0.
\]
We also have an exact sequence
\[
0\to \cO_{\Clam}(D'-2Y)\to \cO_{\Clam}(D'-Y) \to \cO_{\Clam}(D'-Y)\otimes \cO_{\Clam\cap Y} \to 0.
\]
Therefore the desired assertion follows from Definition \ref{def.ampleD}$(A2)$. 
This completes the proof of Theorem \ref{thm.Lefschetz.dual2}.
$\square$\bigskip

\bigskip

\end{document}